\newtheorem{theorem}{\bf Theorem}
\newtheorem{lemma}{Lemma}
\definecolor{darkgreen}{rgb}{0.,0.6,0.}
\title{The excenters of bicentric polygons are concyclic}
\author[1]{Norbert Hungerb\"uhler}
\author[2]{Clemens Pohle}
\author[3]{Yun Zhang}
\affil[1]{Department of Mathematics, ETH, Z\"urich, Switzerland}
\affil[2]{Kantonsschule Hohe Promenade, Z\"urich, Switzerland}
\affil[3]{Xi'an, Shaanxi Province, China}
\begin{document}
\maketitle

\begin{abstract}
\noindent We show that the centers of the excircles of a bicentric polygon $B$ are concyclic on a circle $E$. The center of the circumscribed circle $K$ of $B$ is the 
midpoint of the center of $E$ and the center of the inscribed circle $C$ of $B$. The radius of $E$ is given by
a simple formula in terms of the radii of $C$ and $K$ and the distance between their centers.
\end{abstract}

{\bf Key words.} bicentric polygons, excircles, Feuerbach's nine-point circle, Euler line.

{\bf Mathematics Subject Classification.} 51M04, 51M15

\section{Introduction}\footnotetext[1]{norbert.hungerbuehler@math.ethz.ch}\footnotetext[2]{clemens.pohle@kshp.ch}\footnotetext[3]{yunzhangmath@126.com}
A polygon that is inscribed in a circle $K$ and circumscribed around a circle $C$ is called a bicentric polygon.
Clearly all triangles are bicentric. If $R_K$ and $R_C$ are the radii of the circumcircle $K$ and incircle $C$ of a triangle, and
$d$ the distance between their centers, then the Chapple-Euler relation
\begin{equation}\label{eq-fuss3}
\frac1{R_K-d}+\frac1{R_K+d}=\frac1{R_C}
\end{equation}
holds (see~\cite{euler} and~\cite{chappel}).
A convex quadrilateral is bicentric if it satisfies Fuss's condition
\begin{equation}\label{eq-fuss4}
\frac1{(R_K-d)^2}+\frac1{(R_K+d)^2}=\frac1{R_C^2}
\end{equation}
(see, e.g.,~\cite[Problem 39]{doerrier}).
Fuss~\cite{fuss},  Euler~\cite{euler}, and Steiner~\cite{steiner}  found formulas relating 
the radii of $C$ and $K$ and the distance between their centers such that the circles carry a bicentric polygon
for up to 10 vertices.
Later, Jacobi~\cite{jacobi}, Richelot~\cite{richelot} and Kerawala~\cite{kerawala} used elliptic functions 
to solve the problem for a general number of vertices.
Also note that the circles $K$ and $C$ of a bicentric polygon form a Poncelet pair, i.e., every point
of $K$ is the vertex of a bicentric polygon inscribed in $K$ and circumscribed around $C$. 
Poncelet's porism holds more generally for pairs of conics, not necessarily disjoint and possibly nested
(see, e.g.,~\cite{hhponcelet} for a simple proof, and~\cite{dragovic,flatto} for a comprehensive treatment of Poncelet's porism).
Bicentric polygons have been studied from various viewpoints, we refer the interested reader to the respective literature:
see~\cite{radic2015} and the references therein, or~\cite{cheng,roitman,gg}.
In this article we would like to add a geometric property of bicentric polygons that seems to be new.

It is well known that the centers of the excircles of a convex quadrilateral are concyclic. See Figure~\ref{fig-4eck} for
a short proof. By an excircle of a convex polygon we mean a circle that is tangent to one side of the polygon 
and the extensions of the two adjacent sides\footnote{Sometimes such circles are called escribed circles.}. 
Below, we will extend the notion of excircles to general, non-convex and even self-intersecting polygons.
This is necessary so that we can state our main Theorem~\ref{thm-main}  in its full generality.
Notice that the centers of the excircles of a convex polygon with more than four vertices are
in general not concyclic. However, it turns out that bicentric polygons have exactly this 
property (see Theorem~\ref{thm-main} in Section~\ref{sec2}, and Figure~\ref{fig-0} for the case of a bicyclic pentagon).
In Section~\ref{sec-quadrilateral}, we examine the case of bicentric quadrilaterals more closely. And in 
Section~\ref{sec4}, we show that the ratio of the areas of the bicentric polygon $B$ and the polygon $P$ given by the centers of the excircles equals the ratio of the radii of the incircle of $B$ and the circumcircle of $P$.
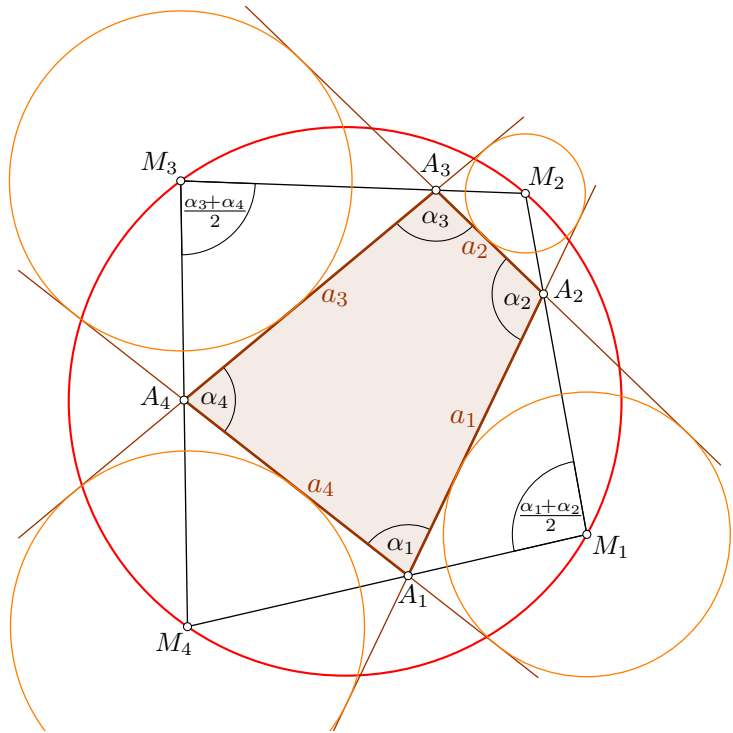
\begin{figure}[h]
\begin{center}
\begin{tikzpicture}[line cap=round,line join=round,x=18,y=18]
\definecolor{zzttqq}{rgb}{0.6,0.2,0.}
\clip(-5.050831243382393,-5.942081634663557) rectangle (10.106917109494933,9.239424944472251);

\draw[line width=0.5pt] (-1.4290423462625705,5.574480010324912) -- (5.738469853655193,5.31192082248471) -- (7.017353083179694,-1.8346304816762613) -- (-1.2886476698017661,-3.768199801020019) -- cycle;

\fill[line width=1.pt,color=zzttqq,fill=zzttqq,fill opacity=0.10000000149011612] (-1.36,0.98) -- (3.3,-2.7) -- (6.115535098078982,3.204835474831418) -- (3.88,5.38) -- cycle;
\draw [shift={(3.3,-2.7)},line width=0.4pt] (0,0) -- (64.50732779313682:1.069120181629282) arc (64.50732779313682:141.70186466650978:1.069120181629282) -- cycle;
\draw [shift={(3.88,5.38)},line width=0.4pt] (0,0) -- (-139.979999596311:1.069120181629282) arc (-139.979999596311:-44.21582439295:1.069120181629282) -- cycle;
\draw [shift={(6.115535098078982,3.204835474831418)},line width=0.4pt] (0,0) -- (135.78417560705003:1.069120181629282) arc (135.78417560705003:244.50732779313685:1.069120181629282) -- cycle;
\draw [shift={(-1.36,0.98)},line width=0.4pt] (0,0) -- (-38.29813533349022:1.069120181629282) arc (-38.29813533349022:40.020000403689025:1.069120181629282) -- cycle;
\draw [shift={(7.017353083179694,-1.8346304816762613)},line width=0.4pt] (0,0) -- (100.14575170009341:1.55) arc (100.14575170009341:193.10459622982333:1.55) -- cycle;
\draw [shift={(-1.4290423462625705,5.574480010324912)},line width=0.4pt] (0,0) -- (-89.1390674649006:1.55) arc (-89.1390674649006:-2.097911994630483:1.55) -- cycle;
\draw [line width=1.pt,color=zzttqq] (-1.36,0.98)-- (3.3,-2.7) node[midway,right] {$a_4$};
\draw [line width=1.pt,color=zzttqq] (3.3,-2.7)-- (6.115535098078982,3.204835474831418)node[midway,left,yshift=6pt,xshift=5pt] {$a_1$};
\draw [line width=1.pt,color=zzttqq] (6.115535098078982,3.204835474831418)-- (3.88,5.38)node[midway,left,xshift=4pt,yshift=-3pt] {$a_2$};
\draw [line width=1.pt,color=zzttqq] (3.88,5.38)-- (-1.36,0.98)node[midway,right,yshift=-1pt] {$a_3$};
\draw [line width=0.8pt,red] (1.99024656588744,0.9534677020284253) circle (5.748503406816389);

\draw [line width=.5pt,domain=-4.8:6,color=zzttqq] plot(\x,{(-0.438-3.68*\x)/4.66});

\draw [line width=.5pt,domain=-5.050831243382393:7.2,color=zzttqq] plot(\x,{(-27.08790183175693--5.904835474831418*\x)/2.815535098078982});

\draw [line width=.5pt,domain=-5.050831243382393:9.8,color=zzttqq] plot(\x,{(-20.46681718531902--2.175164525168582*\x)/-2.235535098078982});

\draw [line width=0.5pt,domain=-4.8:5.7,color=zzttqq] plot(\x,{(-11.1192-4.4*\x)/-5.24});

\draw [line width=0.5pt,color=orange] (-1.4290423462625705,5.574480010324912) circle (3.562942782632381);
\draw [line width=0.5pt,color=orange] (5.738469853655193,5.31192082248471) circle (1.2472346837621457);
\draw [line width=0.5pt,color=orange] (7.017353083179694,-1.8346304816762613) circle (2.9829815558231343);
\draw [line width=0.5pt,color=orange] (-1.2886476698017661,-3.768199801020019) circle (3.682149921976568);

\begin{small}
\draw [fill=white] (-1.36,0.98) circle (1.5pt) node[left,xshift=-1pt,yshift=1pt] {$A_4$};
\draw [fill=white] (3.3,-2.7) circle (1.5pt)  node[below,xshift=2pt] {$A_1$};
\draw [fill=white] (6.115535098078982,3.204835474831418) circle (1.5pt) node[right,yshift=1pt] {$A_2$};
\draw [fill=white] (3.88,5.38) circle (1.5pt) node[above,yshift=1.7pt] {$A_3$};
\draw [fill=white] (-1.2886476698017661,-3.768199801020019) circle (1.5pt) node[below,xshift=-5pt,yshift=1pt] {$M_4$};
\draw [fill=white] (7.017353083179694,-1.8346304816762613) circle (1.5pt) node[below,xshift=10pt,yshift=3pt,xshift=-1pt] {$M_1$};
\draw [fill=white] (5.738469853655193,5.31192082248471) circle (1.5pt)  node[above,xshift=8pt,yshift=-2pt] {$M_2$};
\draw[color=black] (3.549646662168704,-2.2001609989610653) node[xshift=-7pt,yshift=2pt] {$\alpha_1$};
\draw[color=black] (4.214876997404702,5.046098009859623) node[xshift=-6pt,yshift=-5pt] {$\alpha_3$};
\draw[color=black] (5.97298574052841,3.21671458796063) node[xshift=-7pt,yshift=-3pt] {$\alpha_2$};
\draw[color=black] (-0.6080429330562812,1.0784742247020662) node[xshift=-2pt,yshift=-2pt] {$\alpha_4$};
\draw [fill=white] (-1.4290423462625705,5.574480010324912) circle (1.5pt) node[above,xshift=-8pt,yshift=-1pt] {$M_3$};
\draw[color=black] (6.709490754539693,-0.7746674234553558) node[xshift=-8pt,yshift=-12pt] {$\frac{\alpha_1\!+\alpha_2}2$};
\draw[color=black] (-0.8,4.998581557342766) node[xshift=1.5pt,yshift=-1pt] {$\frac{\alpha_3\!+\alpha_4}2$};
\end{small}
\end{tikzpicture}
\caption{The centers of the orange excircles of a convex quadrilateral $A_1A_2A_3A_4$ are concyclic. The thin black lines are the external angle
bisectors of the quadrilateral. Their intersections, the points $M_1,M_2,M_3,M_4$, are the centers of the excircles. The excircle with center $M_i$
touches the side $a_i$ and the extended sides $a_{i-1}$ and $a_{i+1}$.
The sum of opposite angles, e.g.~in $M_1$ and $M_3$, equals $\pi$.
Hence, $M_1,M_2,M_3,M_4$ lie on a circle.}\label{fig-4eck}
\end{center}
\end{figure}

\section{Main result and proof}\label{sec2}
Before we state the main result it is necessary to fix the general framework. 
For the definition of a polygon, we follow~\cite[\S 3.1]{coxeter2}:
For $n\geq 3$, a polygon or $n$-gon  consists of $n$ points (the vertices) $A_1,A_2,\ldots,A_n$ and $n$ straight lines (or sides) $a_1,a_2,\ldots,$ $a_n$ such that for each $i\in\{1,\ldots,n\}$ we have
\begin{itemize}
\item $A_i$ and $A_{i+1}$ are distinct and lie on $a_i$,
\item $a_i$ and $a_{i+1}$ are distinct and meet in $A_{i+1}$.
\end{itemize} 
Here and in the sequel indices are always read cyclically.
Note that we allow our polygons to be  non-convex and even self-intersecting,
and that the sides are extended straight lines, not only the segments between two vertices.
The polygon is called cyclic if all its vertices lie on a circle (the circumcircle) $K$.
The polygon has an inscribed circle $C$ if all its sides are tangent to $C$.
The polygon is bicyclic, if it has both a circumcircle and an inscribed circle.
In every vertex of a bicyclic polygon, we have two orthogonal angle bisectors of the sides
meeting in that vertex. One of them is incident with the center of the inscribed circle and
is called internal angle bisector, the other one is called external angle bisector.
The external angle bisectors in adjacent points $A_i,A_{i+1}$ meet in the center $M_i$
of the excircle touching the sides $a_{i-1},a_i$, and $a_{i+1}$ (see Figure~\ref{fig-excircles}).

\begin{figure}[h!]
\begin{center}
\begin{tikzpicture}[line cap=round,line join=round,x=37,y=37]
\definecolor{zzttqq}{rgb}{0.6,0.2,0.}
\clip(-1,-2.4) rectangle (5.5,2);
\draw [color=zzttqq,line width=1pt,domain=-1.7177326493350782:7.380992441294493] plot(\x,{(4.7044-0.02*\x)/2.84});
\draw [color=zzttqq,line width=1pt,domain=-1.7177326493350782:4.6] plot(\x,{(7.4816-2.16*\x)/1.48});
\draw [color=zzttqq,line width=1pt,domain=-1.7177326493350782:7.380992441294493] plot(\x,{(5.4935556482988845-1.2288197580751827*\x)/-1.5374311008686283});
\draw [line width=0.8pt,color=black,domain=2.1:4.19] plot(\x,{(--2.546429180502941-0.4693569078517777*\x)/0.8830085464205983});
\draw [line width=0.8pt,color=blue,domain=1.22:2.45] plot(\x,{(-1.2964946697472843--0.8830085464205983*\x)/0.4693569078517777});
\draw [line width=0.8pt,color=black,domain=3.8:4.05] plot(\x,{(--3.8549241930404103-0.9890836022685863*\x)/-0.14735544687386778});
\draw [line width=0.8pt,color=blue,domain=1.1:4.05] plot(\x,{(--0.04857433387850996-0.14735544687386778*\x)/0.9890836022685863});
\draw [line width=0.8pt,color=orange] (4.0095858165961165,0.7525435428147303) circle (0.8756770969850698);
\draw [color=blue,line width=0.8pt] (1.3847178787656216,-0.15718730710390866) circle (1.80386991657818);
\begin{small}
\draw [fill=white] (-0.5,1.66) circle (1.5pt)  node[below] {$A_{i+2}$};
\draw [fill=white] (2.34,1.64) circle (1.5pt)  node[above,xshift=16pt,yshift=-1.5pt] {$A_{i+1}$};
\draw[,color=zzttqq] (.7,1.8) node {$a_{i+1}$};
\draw [fill=white] (3.82,-0.52) circle (1.5pt) node[below,yshift=-2.3pt] {$A_i$};
\draw[,color=zzttqq] (4.6,-1.4) node {$a_i$};
\draw [fill=white] (1.7448057648840734,-2.1786367197290515) circle (1.5pt) node[below,xshift=11pt,yshift=5pt] {$A_{i-1}$};
\draw[color=blue] (-.1,.5) node {$C$};

\draw[color=zzttqq] (3.15,-1.3723207888077527) node {$a_{i-1}$};
\draw [fill=white] (4.0095858165961165,0.7525435428147303) circle (1.5pt) node[right,yshift=3pt] {$M_i$};
\draw [fill=blue] (1.3847178787656216,-0.15718730710390866) circle (1.5pt) node[blue,below,xshift=6pt] {$M_C$};
\end{small}
\end{tikzpicture}
\caption{Internal angle bisectors (blue) meeting in the center $M_C$ of the inscribed circle $C$. External angle bisectors (black)
meeting in the center $M_i$ of the excircle (orange) that touches  the sides $a_{i-1},a_i,a_{i+1}$.}\label{fig-excircles}
\end{center}
\end{figure}
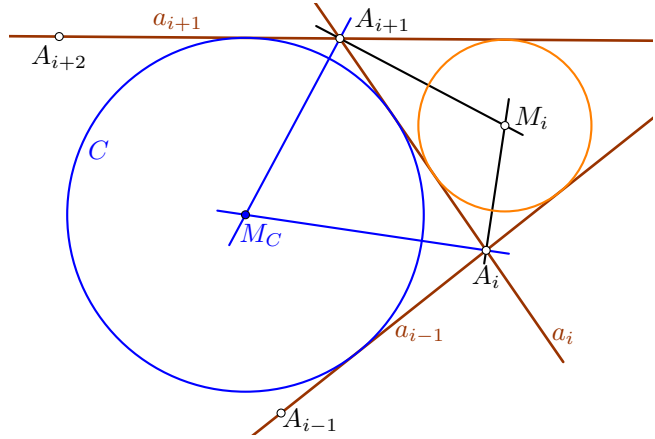

Now we can state the main result.

\begin{theorem}\label{thm-main}
Let $B$ be a bicentric polygon, inscribed in a circle $\textcolor{darkgreen}{K}$ and circumscribed around a circle $\textcolor{blue}C$.
Then the centers of the excircles of $B$ are concyclic on a circle $\textcolor{red}E$. The center of $\textcolor{darkgreen}K$ is the
midpoint of the centers of $\textcolor{blue}C$ and $\textcolor{red}E$. The radius $\textcolor{red}{R_E}$ of $\textcolor{red}E$ is given by 
\begin{equation}\label{qqq}
\textcolor{red}{R_E}=\frac{|\textcolor{darkgreen}{R_K^{\textcolor{black}2}}-d^2|}{\textcolor{blue}{R_C}},
\end{equation}
where $\textcolor{darkgreen}{R_K},\textcolor{blue}{R_C}$ are the radii of $\textcolor{darkgreen}K$ and $\textcolor{blue}C$, respectively, and $d$ is the distance between their centers.
\end{theorem}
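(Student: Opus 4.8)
The plan is to avoid any case distinction on the number of vertices and instead treat a single excenter $M_i$ at a time, combining a right-angle (Thales) configuration with a radical-axis computation. Write $O$ for the center of $K$ and $I$ for the center of $C$, so that $|OI| = d$. By the framework fixed above, the internal bisector at a vertex $A_j$ passes through $I$, and the external bisector is the line through $A_j$ perpendicular to $IA_j$. Since $M_i$ lies on the external bisectors at both $A_i$ and $A_{i+1}$, we have $\angle IA_iM_i = \angle IA_{i+1}M_i = \pi/2$. Hence $A_i$ and $A_{i+1}$ both lie on the circle $\Gamma_i$ having $IM_i$ as a diameter; let $c_i$ denote its center, i.e.\ the midpoint of $IM_i$.

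The key step is then a radical-axis argument. The points $A_i,A_{i+1}$ are common to $K$ and $\Gamma_i$, so (for two distinct circles) the radical axis of $K$ and $\Gamma_i$ is exactly the side line $a_i$. The difference of the two power functions,
\[
f(P)=\operatorname{pow}(P,K)-\operatorname{pow}(P,\Gamma_i)=2\,P\cdot(c_i-O)+\text{const},
\]
is affine with gradient $2(c_i-O)$ and vanishes on $a_i$; consequently $|f(P)|=2\,|c_i-O|\,\operatorname{dist}(P,a_i)$ for every $P$. I would evaluate this at $P=I$. On one hand $\operatorname{pow}(I,\Gamma_i)=0$ because $I\in\Gamma_i$, while $\operatorname{pow}(I,K)=d^2-R_K^2$; on the other hand $\operatorname{dist}(I,a_i)=R_C$ because $C$ is tangent to the side $a_i$. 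This yields $|R_K^2-d^2|=2\,|c_i-O|\,R_C$, so that
\[
|c_i-O|=\frac{|R_K^2-d^2|}{2R_C}
\]
is the \emph{same} for every $i$; this is precisely where both the incircle and the circumcircle hypotheses enter.

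Finally I would unwind the definition of $c_i$. Since $c_i$ is the midpoint of $IM_i$ we have $M_i=2c_i-I$, so setting $O':=2O-I$ (the reflection of $I$ in $O$) gives $M_i-O'=2(c_i-O)$ and hence $|M_i-O'|=2|c_i-O|=|R_K^2-d^2|/R_C$, independent of $i$. Thus all excenters lie on the circle $E$ centered at $O'$ with radius $R_E=|R_K^2-d^2|/R_C$, and the identity $O=\tfrac12(I+O')$ shows that $O$ is the midpoint of the centers of $C$ and $E$. All three assertions then follow at once.

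I expect the main obstacles to be bookkeeping rather than conceptual: pinning down the exact constant $2|c_i-O|$ in the power-versus-distance identity (including the correct handling of signed versus unsigned distance), and confirming that the perpendicularity ``external bisector $\perp IA_j$'' is exactly the input needed to place $A_i,A_{i+1}$ on the diameter circle $\Gamma_i$. I would also need to dispatch the degenerate possibilities in which $\Gamma_i$ and $K$ fail to be two distinct proper circles intersecting in two points, for instance if $I$ lies on $K$ (so $d=R_K$, $R_E=0$, and all excenters collapse to $O'$) or if some $M_i$ coincides with $I$; these are limiting configurations that I would treat separately or recover by continuity.
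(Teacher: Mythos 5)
Your argument is correct, and its overall skeleton coincides with the paper's: you observe that $A_i$ and $A_{i+1}$ see the segment $IM_i$ under a right angle (external bisector $\perp$ internal bisector at each vertex), so they lie on the Thales circle $\Gamma_i$ over $IM_i$; you then show that the midpoint $c_i$ of $IM_i$ lies at the fixed distance $|R_K^2-d^2|/(2R_C)$ from $O$; and you conclude with the homothety $P\mapsto 2P-I$. The only genuine difference is how that fixed-distance statement (the paper's Lemma~\ref{lem-main}) is established. The paper drops the foot $Q$ of the perpendicular from $M_K$ (your $O$) onto the tangent line and computes the circumradius of $M_CP_1P_2$ twice via the Pythagorean theorem; this forces a $\pm$ sign depending on which side of the tangent $M_K$ lies, plus a separate treatment of the cases $d<R_K$, $d>R_K$, $d=R_K$. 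Your radical-axis computation --- the difference $f$ of the power functions of $K$ and $\Gamma_i$ is affine with gradient $2(c_i-O)$ and vanishes at the two distinct points $A_i,A_{i+1}$ of $a_i$, hence on all of $a_i$, and evaluating at $I$ gives $|d^2-R_K^2|=2|c_i-O|\,R_C$ via $\operatorname{pow}(I,\Gamma_i)=0$ and $\operatorname{dist}(I,a_i)=R_C$ --- reaches the same identity in one stroke, with the absolute value appearing automatically and no case distinction on the position of $O$ relative to the side. The degenerate possibilities you flag are harmless: if $c_i=O$ then $f$ is constant and vanishes at $A_i$, so both sides of the identity are zero (this is exactly $d=R_K$); and $M_i=I$ cannot occur, since the external bisector at $A_j$ is at distance $|IA_j|\ge R_C>0$ from $I$. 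In short: same decomposition and the same key lemma, but a cleaner, sign-free proof of the lemma that buys you a uniform treatment of all relative positions of $C$ and $K$.
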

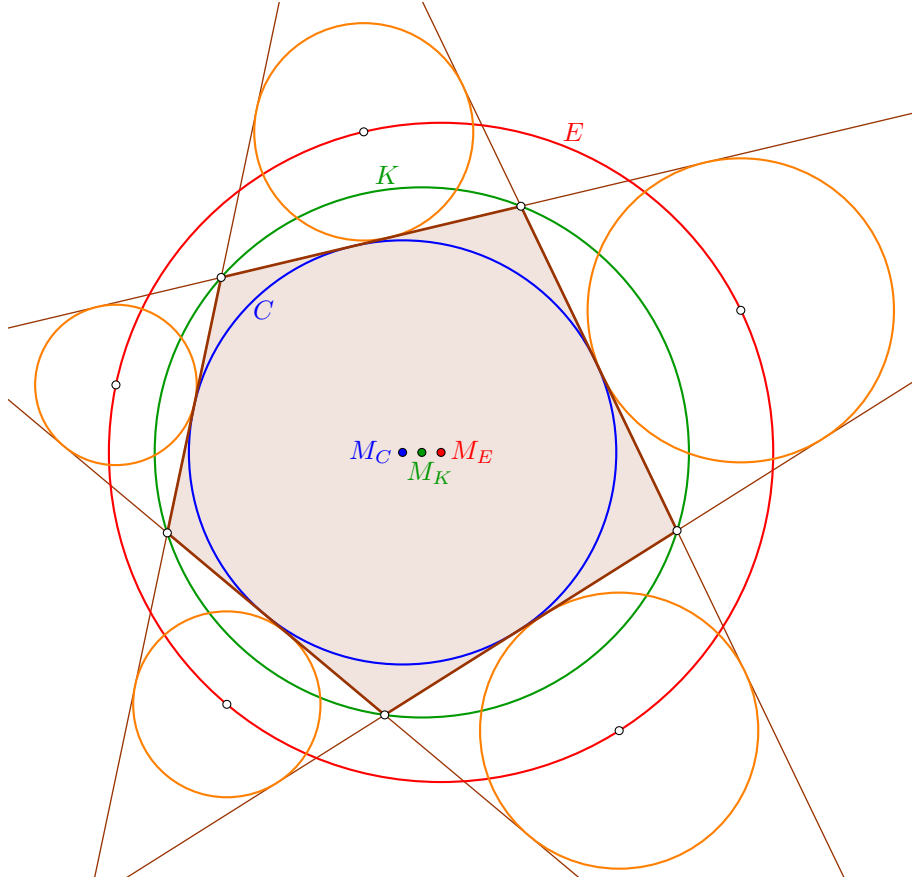
\begin{figure}[h]
\begin{center}
\definecolor{zzttqq}{rgb}{0.6,0.2,0.}
\begin{tikzpicture}[line cap=round,line join=round,x=80,y=80]
\clip(-1.846512100093996,-1.9981200984078547) rectangle (2.383849931144199,2.120570885421884);
\fill[line width=0.5pt,color=zzttqq,fill=zzttqq,fill opacity=0.12999999523162842] (-0.8490145317877361,0.8250510681914556) -- (0.5538076063389463,1.1607770965586732) -- (1.2843633057099306,-0.36884994060498993) -- (-0.0837148276035754,-1.2378672409631384) -- (-1.1010332538417709,-0.3793230193769551) -- cycle;
\draw [line width=0.8pt,color=blue] (0.,0.) circle (1.);
\draw [line width=0.8pt,color=darkgreen] (0.0900226,0.) circle (1.25);
\draw [line width=0.5pt,color=zzttqq,domain=-1.846512100093996:2.383849931144199] plot(\x,{(-0.8863630067519995--0.7998228400258824*\x)/-0.3819984349593294});
\draw [line width=0.5pt,color=zzttqq,domain=-1.846512100093996:2.383849931144199] plot(\x,{(--0.8863630067519999-0.47525222196983*\x)/-0.7481809308256826});
\draw [line width=0.5pt,color=zzttqq,domain=-1.846512100093996:2.383849931144199] plot(\x,{(--0.7343864640707839--0.47364307295978425*\x)/-0.56123588449742});
\draw [line width=0.5pt,color=zzttqq,domain=-1.846512100093996:2.383849931144199] plot(\x,{(--0.5967915708977861--0.5841397881301917*\x)/0.1222329211704527});
\draw [line width=0.5pt,color=zzttqq,domain=-1.846512100093996:2.383849931144199] plot(\x,{(-1.4424361803271903-0.33572602836721765*\x)/-1.4028221381266825});
\draw [line width=0.8pt,color=red] (0.18004519880737985,0) circle (1.5543959172430162);
\draw [line width=0.8pt,color=orange] (-0.8224635629008753,-1.1879069374901632) circle (0.43827565153887377);
\draw [line width=0.8pt,color=orange] (-1.341398044235864,0.31836632125756653) circle (0.37816761314254904);
\draw [line width=0.8pt,color=orange] (-0.18173940160949478,1.5117071440389551) circle (0.5124904407018981);
\draw [line width=0.8pt,color=orange] (1.5826774735297282,0.6699024856329198) circle (0.7168623704040236);
\draw [line width=0.8pt,color=orange] (1.0134849020563135,-1.3120689672675692) circle (0.6509329944117488);
\draw [line width=1.pt,color=zzttqq] (-0.8490145317877361,0.8250510681914556)-- (0.5538076063389463,1.1607770965586732);
\draw [line width=1.pt,color=zzttqq] (0.5538076063389463,1.1607770965586732)-- (1.2843633057099306,-0.36884994060498993);
\draw [line width=1.pt,color=zzttqq] (1.2843633057099306,-0.36884994060498993)-- (-0.0837148276035754,-1.2378672409631384);
\draw [line width=1.pt,color=zzttqq] (-0.0837148276035754,-1.2378672409631384)-- (-1.1010332538417709,-0.3793230193769551);
\draw [line width=1.pt,color=zzttqq] (-1.1010332538417709,-0.3793230193769551)-- (-0.8490145317877361,0.8250510681914556);
\begin{small}
\draw [fill=blue] (0.,0.) circle (1.5pt) node[left,blue,xshift=-0pt] {$M_C$};
\draw[color=blue] (-0.65,0.6721317116827177) node {$C$};
\draw [fill=darkgreen] (0.0900226,0.) circle (1.5pt) node[below,darkgreen,xshift=3pt] {$M_K$};
\draw[color=darkgreen] (-0.07291311184195724,1.31) node {$K$};
\draw[color=red] (.8,1.51) node {$E$};

\draw [fill=white] (1.2843633057099306,-0.36884994060498993) circle (1.5pt);
\draw [fill=white] (-0.0837148276035754,-1.2378672409631384) circle (1.5pt);
\draw [fill=white] (-1.1010332538417709,-0.3793230193769551) circle (1.5pt);
\draw [fill=white] (0.5538076063389463,1.1607770965586732) circle (1.5pt);
\draw [fill=white] (-0.8490145317877361,0.8250510681914556) circle (1.5pt);
\draw [fill=white] (1.0134849020563135,-1.3120689672675692) circle (1.5pt);
\draw [fill=white] (-0.8224635629008753,-1.1879069374901632) circle (1.5pt);
\draw [fill=white] (-1.341398044235864,0.31836632125756653) circle (1.5pt);
\draw [fill=white] (-0.18173940160949478,1.5117071440389551) circle (1.5pt);
\draw [fill=white] (1.5826774735297282,0.6699024856329198) circle (1.5pt);
\draw [fill=red] (0.18004519880737985,0) circle (1.5pt) node[right,red,xshift=0pt] {$M_E$};
\end{small}
\end{tikzpicture}

\caption{Illustration for the theorem. The centers of the orange \textcolor{orange}{excircles} of the brown \textcolor{zzttqq}{bicentric pentagon} are concyclic on the red circle $\textcolor{red}E$.
The center $\textcolor{darkgreen}{M_K}$ of the circumcircle $\textcolor{darkgreen}K$ is the midpoint of the center $\textcolor{red}{M_E}$ of $\textcolor{red}E$ and the center $\textcolor{blue}{M_C}$ of the inscribed circle $\textcolor{blue}C$.}\label{fig-0}
\end{center}
\end{figure}
In the simplest case, consider a triangle $A_1A_2A_3$ with incircle $C$, circumcircle $K$, and  centers $M_1,M_2,M_3$ of its excircles.
Then, the center of $C$ is the orthocenter of the triangle $M_1M_2M_3$, and the center of $K$ is the center of Feuerbach's nine-point circle of the triangle $M_1M_2M_3$.
Hence the theorem reduces in this case to the well known properties of the Euler line. In particular, combining~(\ref{qqq}) and~(\ref{eq-fuss3}) we get
\begin{eqnarray*}
R_E^2 &\overset{(\ref{qqq})}{=}& \frac{(R_K^2-d^2)^2}{R_C^2}\overset{(\ref{eq-fuss3})}{=}\\
&=&  \frac{(R_K^2-d^2)^2 (2R_K)^2}{(R_k-d)^2(R_K+d)^2}=4R_K^2.
\end{eqnarray*}
This is the fact that the radius of the Feuerbach circle is half the radius of the circumcircle of a triangle.

Similarly, for convex quadrilaterals, we get by~(\ref{qqq}) and~(\ref{eq-fuss4}) a simple formula which directly links $R_E$ to $R_K$ and $d$:
\begin{eqnarray}
R_E^2 &\overset{(\ref{qqq})}{=}& \frac{(R_K^2-d^2)^2}{R_C^2}\overset{(\ref{eq-fuss4})}{=}\notag\\
&=&  \frac{(R_K^2-d^2)^2 (2R_K^2+2d^2)}{(R_k-d)^2(R_K+d)^2}=2(R_K^2+d^2).\label{eq-quadrilateral}
\end{eqnarray}
The case of a bicentric convex quadrilateral will be addressed again in Section~\ref{sec-quadrilateral}.

In the proof of Theorem~\ref{thm-main} we need the following lemma which is of interest in itself.
\begin{lemma}\label{lem-main}
Let $\textcolor{blue}{C}$ be a circle with radius $\textcolor{blue}{R_C}$ and center $\textcolor{blue}{M_C}$,  $\textcolor{darkgreen}{K}$ a  
circle with radius $\textcolor{darkgreen}{R_K}$ and center $\textcolor{darkgreen}{M_K}$, and $d$ the distance between $\textcolor{blue}{M_C}$ and $\textcolor{darkgreen}{M_K}$.
Let $t$ be a tangent to $\textcolor{blue}{C}$ which
intersects $\textcolor{darkgreen}{K}$ in the points $P_1,P_2$. Then the center $\textcolor{magenta}{M_D}$ of the circumcircle $\textcolor{magenta}{D}$
of the triangle $M_CP_1P_2$ lies on a circle $F$ with center $\textcolor{darkgreen}{M_K}$ and radius
\begin{equation}\label{eq-lemma}
\textcolor{black}{R_F}=\frac{|\textcolor{darkgreen}{R_K^{\textcolor{black}2}}-d^2|}{2\textcolor{blue}{R_C}},
\end{equation}
independent of $t$ (see~Figure~\ref{fig-lemma}).
\end{lemma}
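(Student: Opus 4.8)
The plan is to place everything in coordinates and exploit the fact that $P_1,P_2$ are common points of the two circles $K$ and $D$, so that the line $t=P_1P_2$ is their common chord. Consequently the sought center $M_D$ lies on the perpendicular bisector of $P_1P_2$, and that perpendicular bisector passes through $M_K$ as well, since $P_1,P_2\in K$. Thus $M_D$ is confined to a single line through $M_K$, namely the normal to $t$, and the whole problem reduces to locating one scalar coordinate of $M_D$ along that line.

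Concretely, I would fix $M_K$ at the origin and put $M_C=(d,0)$. I write the tangent $t$ through its unit normal $(\cos\theta,\sin\theta)$ as $x\cos\theta+y\sin\theta=p$, so that $p$ is the signed distance from $M_K$ to $t$ and the foot of the perpendicular from $M_K$ onto $t$ is $H=p(\cos\theta,\sin\theta)$. Tangency of $t$ to $C$ is then the single condition $|d\cos\theta-p|=R_C$, i.e. $d\cos\theta-p=\varepsilon R_C$ with $\varepsilon\in\{+1,-1\}$; this is the only place the circle $C$ enters the computation. Since $M_D$ lies on the normal line through the origin, I set $M_D=s(\cos\theta,\sin\theta)$ and aim to compute $s$.

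The one remaining equation is that $M_D$ is equidistant from $M_C$ and from $P_1$ (both distances equal the circumradius $R_D$). For the second distance I use Pythagoras in the right triangle $M_DHP_1$: since $H$ is the midpoint of the chord $P_1P_2$ of $K$, one has $|HP_1|^2=R_K^2-p^2$, while $|M_DH|=|s-p|$, so $|M_DP_1|^2=(s-p)^2+R_K^2-p^2=s^2-2sp+R_K^2$. On the other side $|M_DM_C|^2=s^2-2sd\cos\theta+d^2$. Equating the two makes the $s^2$ terms cancel and leaves the linear relation $2s(d\cos\theta-p)=d^2-R_K^2$. Substituting the tangency condition $d\cos\theta-p=\varepsilon R_C$ gives $s=\varepsilon(d^2-R_K^2)/(2R_C)$, whence $|M_DM_K|=|s|=|R_K^2-d^2|/(2R_C)=R_F$, visibly independent of $\theta$, i.e. of the choice of $t$.

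The computation itself is short; what really has to be gotten right is the geometric reduction in the first paragraph (that $M_D$ is pinned to the normal through $M_K$) together with the clean cancellation of the quadratic term, which is precisely what forces the answer to be independent of $t$. I would also record the nondegeneracy assumptions implicit in the statement: $t$ must meet $K$ in two distinct points ($|p|<R_K$) so that $P_1,P_2$ exist, and $M_C\notin t$ (automatic, as $M_C$ has distance $R_C>0$ from $t$) so that $M_C,P_1,P_2$ are not collinear and the circumcircle $D$ is well defined.
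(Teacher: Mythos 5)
Your proof is correct and follows essentially the same route as the paper's: both arguments rest on the observation that $M_D$ and $M_K$ lie on the common perpendicular to $t$ (the perpendicular bisector of $P_1P_2$), and both equate the two Pythagorean expressions for $R_D^2=|M_DP_1|^2=|M_DM_C|^2$ so that the quadratic terms cancel and a linear relation determines $|M_DM_K|$. The only difference is presentational: your coordinate parametrization with the signed distance $p$ and the sign $\varepsilon$ absorbs the case distinctions that the paper handles synthetically with its $\pm|M_KQ|$ terms.
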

\begin{figure}[h!]
\begin{center}
\begin{tikzpicture}[line cap=round,line join=round,x=8,y=8]
\clip(-8.695872218020433,-17.083027624381664) rectangle (25.005335080526162,16.792196235086926);
\draw [line width=0.8pt,color=blue] (0.,0.) circle (2.8534923953730567);
\draw [line width=0.8pt,color=darkgreen] (6.559579622801859,0.) circle (11.329823578463174);
\draw [line width=0.5pt,domain=-8.695872218020433:25.005335080526162] plot(\x,{(--8.142418850451865-2.1150044634225815*\x)/1.9155090629267255});
\draw [line width=0.8pt,color=magenta] (17.642757312573863,10.037769507318428) circle (20.298366985361906);
\draw [line width=.8pt,{<[scale=1.1]}-{>[scale=1.1]},shorten >=1.5pt,shorten <=1.5pt] (17.642757312573863,10.037769507318428)-- (6.559579622801859,0.) node[midway,yshift=4pt,xshift=-6.7pt]{\small$R_F$};
\draw [line width=0.5pt] (6.559579622801859,0.) circle (14.953047996482574);
\draw [line width=.5pt] (6.558246940554842,0.)-- (5.070229848048584,-1.3476130370928956);
\draw [line width=.5pt] (0,0)-- (2.1150044634225815,1.9155090629267255);
\begin{small}
\draw [fill=blue] (0.,0.) circle (1.5pt) node[below,xshift=-6pt,yshift=2pt] {\textcolor{blue}{$M_C$}};
\draw[color=blue] (-2.5182843052920125,-2.001592344481265) node[yshift=-5pt] {$C$};
\draw [fill=darkgreen] (6.559579622801859,0.) circle (1.5pt) node[below,xshift=7.8pt,yshift=1pt] {\textcolor{darkgreen}{$M_K$}};
\draw[color=darkgreen] (0.6720193116099887,9.135467554522108) node[xshift=7pt] {$K$};
\draw [fill=white] (2.1150044634225815,1.9155090629267255) circle (1.5pt) node[above,xshift=4pt,yshift=-1pt] {$P$};
\draw [fill=white] (-2.414179968138258,6.916396541777396) circle (1.5pt) node[left] {$P_1$};
\draw [fill=white] (12.556005962803239,-9.612896206036856) circle (1.5pt) node[below] {$P_2$};
\draw[color=black] (-7.6,-7) node {$F$};
\draw [fill=magenta] (17.642757312573863,10.037769507318428) circle (1.5pt) node[right,xshift=-1pt,yshift=3pt] {\textcolor{magenta}{$M_D$}};
\draw[color=magenta] (5,-7) node {$D$};
\draw[color=black] (-7.6,11.6) node {$t$};
\draw [fill=white] (5.070229848048584,-1.3476130370928956) circle (1.5pt)  node[below,xshift=-2pt] {$Q$};
\end{small}
\end{tikzpicture}
\caption{Illustration for the lemma.}\label{fig-lemma}
\end{center}
\end{figure}
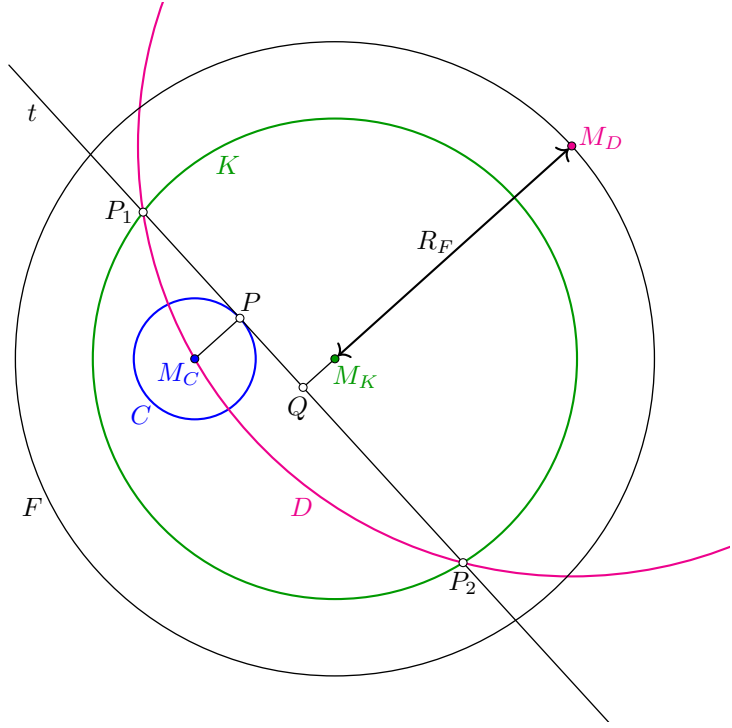
{\em Remark.} If $d<R_K=R_F$ in~(\ref{eq-lemma}) then $C$ and $F$
form a Poncelet pair for triangles. This follows by comparing 
(\ref{eq-fuss3}) and~(\ref{eq-lemma}).
\begin{proof}[Proof of Lemma~\ref{lem-main}]
We assume that $d<R_K$ in the following arguments. The case  $d>R_K$ is similar, and the case $d=R_K$ is trivial.

Let $Q$ be the intersection of the line  $M_DM_K$ with  $t$, and
$R_D$ the radius of $\textcolor{black}D$.
In the right triangles $P_1QM_D$ (used in the first line) and $P_1QM_K$ (used in the second line) we have
\begin{eqnarray}\label{eq-1}
R_D^2 &=& | P_1M_D |^2 = |P_1Q|^2 + |M_DQ|^2 \notag\\&=& 
(R_K^2 - |M_KQ|^2 ) + (|M_DM_K| \pm |M_KQ| )^2 \notag\\&=& R_K^2 + |M_DM_K|^2 \pm 2 |M_DM_K| |M_KQ|.
\end{eqnarray}
Here, depending on which side of $t$ the point $M_K$ lies, one chooses the sign in the term $ \pm |M_KQ|$
in the second line.

Similarly, if we denote the  point at which the tangent $t$ touches the circle $C$ by $P$, we find
\begin{eqnarray}\label{eq-2}
R_D^2 &=& |M_CM_D|^2 = |PQ|^2 + (R_C + |M_DM_K|\pm |M_KQ| )^2 \notag\\&=& 
(d^2 - (R_C \pm |M_KQ|)^2 ) + (R_C + |M_DM_K|\pm |M_KQ| )^2 \notag\\&=& d^2 + |M_DM_K|^2 + 2 |M_DM_K| (R_C \pm |M_KQ|).
\end{eqnarray}
Equating the terms in~(\ref{eq-1}) and~(\ref{eq-2}) and simplifying gives
\begin{equation}\label{eq-77}
R_K^2 = d^2 + 2|M_DM_K|R_C.
\end{equation} 
Hence $|M_DM_K|=:R_F$ does not depend on $t$  and~(\ref{eq-lemma}) follows from~(\ref{eq-77}).
\end{proof}
Now we are ready for the proof of  the Theorem.
\begin{proof}
Consider one side of the bicentric polygon, say $P_1P_2$, as indicated in Figure~\ref{fig-main-proof}.
The black lines in the vertices of the brown bicentric polygon are the external angle bisectors, and their
intersections the centers of the excircles of the bicentric polygon, e.g., the point $N$. The dotted internal
angle bisectors meet in $M_C$ and are orthogonal to the external angle bisectors. Hence the
circumcircle $D$ of the triangle $M_CP_1P_2$ is the circle of Thales over the segment $M_CN$.
According to  Lemma~\ref{lem-main} its center $M_D$ lies on the circle $F$ around $M_K$ with a fixed
radius $R_F$ given by~(\ref{eq-lemma}), independent of the side of the bicentric polygon. It follows by a homothety with center $M_C$
and factor $2$ that $N$ lies on a circle $E$ around the point $M_E$ with radius $2R_F$. This finishes the proof.
\begin{figure}[h]
\begin{center}
\definecolor{zzttqq}{rgb}{0.6,0.2,0.}
\begin{tikzpicture}[line cap=round,line join=round,x=20,y=20]
\clip(-6,-8.144125359216853) rectangle (9.4,7.8);
\fill[line width=1.pt,color=zzttqq,fill=zzttqq,fill opacity=0.10000000149011612] (4.657105909364631,-1.8197155131945828) -- (-2.1409516095630856,4.518442896121321) -- (-4.977857313424909,0.47003889964783224) -- (-3.649739321167288,-3.417514138599188) -- cycle;
\draw [line width=0.8pt,color=darkgreen] (0.,0.) circle (5.);
\draw [line width=0.8pt,color=blue] (-1.6666666666666656,0.) circle (2.98142396999972);
\draw [line width=0.5pt,domain=-7.835254730671592:12.317605892292885] plot(\x,{(-4.99451002067924-0.9900742016634533*\x)/-0.1405456338719762});
\draw [line width=0.5pt,domain=-7.835254730671592:12.317605892292885] plot(\x,{(-4.717254935256269-0.10439293420466805*\x)/-0.9945361307102623});
\draw [line width=0.5pt,domain=-7.835254730671592:12.317605892292885] plot(\x,{(-4.787685254921396-0.5018912592076366*\x)/0.8649307278221608});
\draw [line width=0.5pt,domain=-7.835254730671592:12.317605892292885] plot(\x,{(--4.978712365322759-0.9610034620033334*\x)/-0.27653633760793106});
\draw [line width=0.8pt,red] (1.6666666666666674,0.) circle (7.4535599249993);
\draw [line width=1.pt,color=zzttqq] (4.657105909364631,-1.8197155131945828)-- (-2.1409516095630856,4.518442896121321);
\draw [line width=1.pt,color=zzttqq] (-2.1409516095630856,4.518442896121321)-- (-4.977857313424909,0.47003889964783224);
\draw [line width=1.pt,color=zzttqq] (-4.977857313424909,0.47003889964783224)-- (-3.649739321167288,-3.417514138599188);
\draw [line width=1.pt,color=zzttqq] (-3.649739321167288,-3.417514138599188)-- (4.657105909364631,-1.8197155131945828);
\draw [line width=0.8pt,color=magenta] (2.5414148640841256,2.7258208630614655) circle (5.013785949454471);
\draw [line width=0.5pt] (-1.6666666666666656,0.)-- (1.6666666666666674,0.);
\draw [line width=0.5pt] (0.,0.)-- (2.5414148640841256,2.7258208630614655);
\draw [line width=0.5pt] (1.6666666666666674,0.)-- (6.749496394834917,5.45164172612293);
\draw [line width=0.5pt] (-1.6666666666666656,0.)-- (6.749496394834917,5.45164172612293);
\draw [line width=0.5pt] (0.,0.) circle (3.7267799624996494);

\draw [shift={(-2.1409516095630856,4.518442896121321)},line width=0.5pt]  (-84.00780796054876:0.6282063785213368) arc (-84.00780796054876:5.992192039451243:0.6282063785213368);
\draw [shift={(4.657105909364631,-1.8197155131945828)},line width=0.5pt]  (73.94640920529456:0.6282063785213368) arc (73.94640920529456:163.94640920529454:0.6282063785213368) ;
\fill[line width=1pt] (-1.853802097640709,4.285849007600053) circle (0.04);
\fill[line width=1pt] (4.4782550563186545,-1.4963470986278071) circle (0.04);

\draw [color=blue, line width=0.5pt,dotted] (-1.6666666666666656,0.)-- (-2.1409516095630856,4.518442896121321);
\draw [color=blue, line width=0.5pt,dotted] (-1.6666666666666656,0.)-- (4.657105909364631,-1.8197155131945828);

\begin{small}
\draw [fill=darkgreen] (0.,0.) circle (1.5pt)  node[below] {\textcolor{darkgreen}{$M_K$}};
\draw[color=darkgreen] (-0.8495998015143289,5.211542248146743) node {\textcolor{darkgreen}{$K$}};
\draw [fill=blue] (-1.6666666666666656,0.) circle (1.5pt) node[below,xshift=-4pt] {\textcolor{blue}{$M_C$}};
\draw[color=blue] (-1.2,2.66) node {\textcolor{blue}{$C$}};
\draw [fill=white] (-2.1409516095630856,4.518442896121321) circle (1.5pt) node[above,xshift=-4pt]{$P_1$};
\draw [fill=white] (4.657105909364631,-1.8197155131945828) circle (1.5pt) node[below,xshift=4pt]{$P_2$};
\draw [fill=white] (-4.977857313424909,0.47003889964783224) circle (1.5pt);
\draw [fill=white] (-3.649739321167288,-3.417514138599188) circle (1.5pt);
\draw [fill=white] (-4.43738551204016,4.277394365169058) circle (1.5pt);
\draw [fill=white] (-5.386641946806487,-2.4096458575195197) circle (1.5pt);
\draw [fill=white] (3.0745310640117274,-7.319390233772468) circle (1.5pt);
\draw [fill=magenta] (6.749496394834917,5.45164172612293) circle (1.5pt) node[right,yshift=7pt,xshift=0pt] {\textcolor{magenta}{$N$}};
\draw[color=red] (-2.6839624267966316,6.493083260330267) node {$E$};
\draw [fill=magenta] (1.6666666666666674,0.) circle (1.5pt) node[below,xshift=2pt] {\textcolor{red}{$M_E$}};
\draw[color=magenta] (-1.6,6) node {$D$};
\draw [fill=magenta] (2.5414148640841256,2.7258208630614655) circle (1.5pt) node[above,yshift=2.5pt,xshift=2pt] {\textcolor{magenta}{$M_D$}};
\draw[color=black] (-0.045495637007018044,3.47) node {$F$};
\end{small}
\end{tikzpicture}
\caption{Proof of the theorem.}\label{fig-main-proof}
\end{center}
\end{figure}
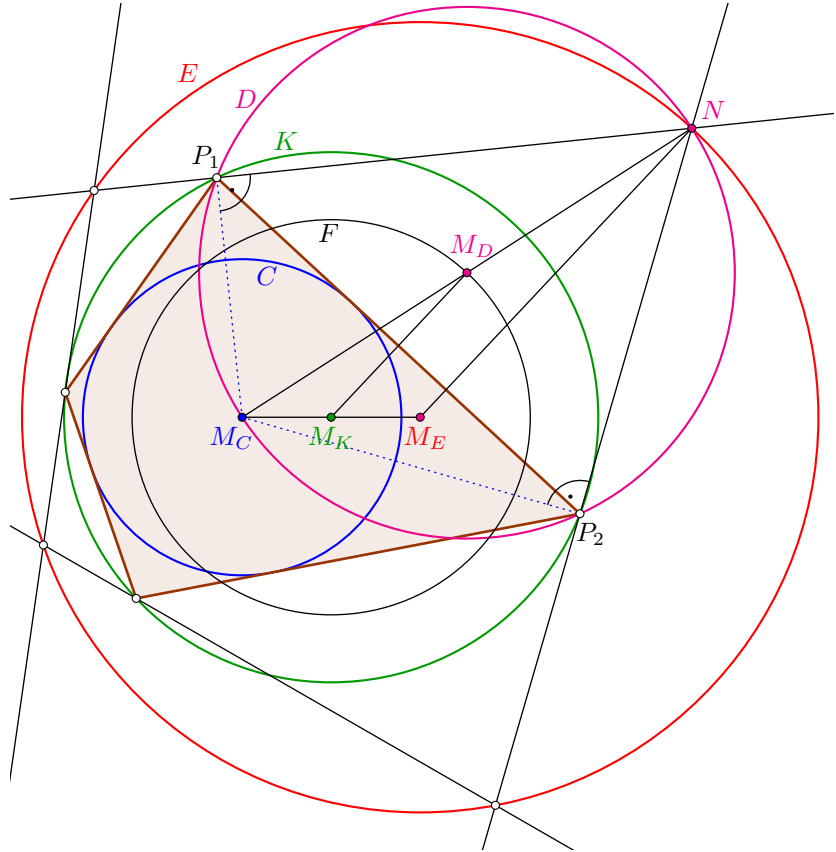
\end{proof}
Notice that it is not necessary that $C$ lies inside $K$. Figure~\ref{fig-schneiden} illustrates the theorem for a bicentric pentagon
if $C$ and $K$ intersect. Figure~\ref{fig-aussen} shows the situation for a bicentric octagon  if $C$ lies outside of $K$.

As a last remark we mention the following dynamic aspect of the main theorem. 
Since the inscribed circle $C$ and the circumcircle $K$ of a bicentric polygon form a Poncelet pair, we can choose any point on $K$ as a vertex of a bicentric polygon with inscribed circle $C$ and circumcircle $K$. It follows from the main theorem that the centers of the excircles of this new  bicyclic polygon still lie on the same circle $E$. Stated differently, if we move a vertex along $K$ the centers of the excircles roll along $E$.
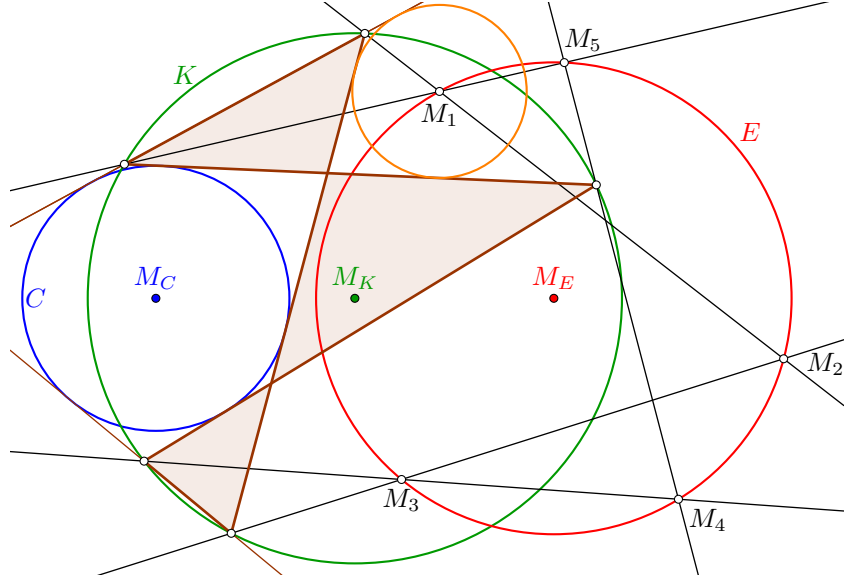
\begin{figure}[h]
\begin{center}
\definecolor{zzttqq}{rgb}{0.6,0.2,0.}
\begin{tikzpicture}[line cap=round,line join=round,x=50,y=50]
\clip(-1.0915841862347755,-2.080272980448976) rectangle (5.151859879059994,2.230195414145063);
\fill[line width=2.pt,color=zzttqq,fill=zzttqq,fill opacity=0.10000000149011612] (-0.08824088675252069,-1.2285922899062751) -- (3.2976396370511556,0.8556368588764651) -- (-0.23553709740145137,1.011351313319191) -- (1.5644636083189256,1.9986099658438903) -- (0.5644589677980355,-1.7730030849036782) -- cycle;
\draw [line width=0.8pt,color=blue] (0.,0.) circle (1.);
\draw [line width=0.8pt,color=darkgreen] (1.489910242,0.) circle (2.);
\draw [line width=0.5pt,color=zzttqq,domain=-1.0915841862347755:5.151859879059994] plot(\x,{(--1.5691251911194288--1.0050678705866263*\x)/-1.2049864899325833});
\draw [line width=0.5pt,color=zzttqq,domain=-1.0915841862347755:5.151859879059994] plot(\x,{(-0.27983781589476636-0.1345720707695448*\x)/-0.24535598825696056});
\draw [line width=0.5pt,color=zzttqq,domain=-1.0915841862347755:5.151859879059994] plot(\x,{(-2.332806931000675-1.121830723294244*\x)/-2.045356693977337});
\draw [line width=0.5pt,domain=-1.0915841862347755:5.151859879059994] plot(\x,{(--3.406837479264144-0.9679474460177145*\x)/0.2511528254823819});
\draw [line width=0.5pt,domain=-1.0915841862347755:5.151859879059994] plot(\x,{(--1.23175706476107--0.07163822357291168*\x)/-0.9974306817635585});
\draw [line width=0.5pt,domain=-1.0915841862347755:5.151859879059994] plot(\x,{(--1.860686396307982-0.3033606140820121*\x)/-0.9528758249760482});
\draw [line width=0.5pt,domain=-1.0915841862347755:5.151859879059994] plot(\x,{(--2.5381072036706382-0.616389885366694*\x)/0.787441114762008});
\draw [line width=0.5pt,domain=-1.0915841862347755:5.151859879059994] plot(\x,{(-1.038416680916073-0.22682329909575855*\x)/-0.9739359275575146});
\draw [line width=0.8pt,red] (2.9798204827220367,-9.39218910211572E-10) circle (1.7801674717780755);
\draw [line width=1.pt,color=zzttqq] (-0.08824088675252069,-1.2285922899062751)-- (3.2976396370511556,0.8556368588764651);
\draw [line width=1.pt,color=zzttqq] (3.2976396370511556,0.8556368588764651)-- (-0.23553709740145137,1.011351313319191);
\draw [line width=1.pt,color=zzttqq] (-0.23553709740145137,1.011351313319191)-- (1.5644636083189256,1.9986099658438903);
\draw [line width=1.pt,color=zzttqq] (1.5644636083189256,1.9986099658438903)-- (0.5644589677980355,-1.7730030849036782);
\draw [line width=1.pt,color=zzttqq] (0.5644589677980355,-1.7730030849036782)-- (-0.08824088675252069,-1.2285922899062751);

\draw [line width=0.8pt,orange] (2.1237502559861925,1.5608138866450778) circle (0.6528075964754018);
\begin{small}
\draw [fill=blue] (0.,0.) circle (1.5pt)  node[above] {\textcolor{blue}{$M_C$}};
\draw[color=blue] (-0.9,0) node {$C$};
\draw [fill=darkgreen] (1.489910242,0.) circle (1.5pt) node[above] {\textcolor{darkgreen}{$M_K$}};
\draw[color=darkgreen] (0.22,1.6880193113875315) node {$K$};
\draw [fill=white] (1.5644636083189256,1.9986099658438903) circle (1.5pt);
\draw [fill=white] (0.5644589677980355,-1.7730030849036782) circle (1.5pt);
\draw [fill=white] (-0.23553709740145137,1.011351313319191) circle (1.5pt);
\draw [fill=white] (3.2976396370511556,0.8556368588764651) circle (1.5pt);
\draw [fill=white] (-0.08824088675252069,-1.2285922899062751) circle (1.5pt);
\draw [fill=white] (3.9129998810226514,-1.515972440716423) circle (1.5pt) node[below,xshift=11pt] {$M_4$};
\draw [fill=white] (1.8395742220908173,-1.3670532891080696) circle (1.5pt) node[below] {$M_3$};
\draw [fill=white] (4.700532852946915,-0.4562292925589347) circle (1.5pt) node[right,xshift=3pt,yshift=-2pt,xshift=2pt] {$M_2$};
\draw [fill=white] (2.1237502559861925,1.5608138866450778) circle (1.5pt) node[below,yshift=-1pt] {$M_1$};
\draw [fill=white] (3.0582000872641077,1.7784411325168703) circle (1.5pt) node[above,xshift=7pt,yshift=1pt] {$M_5$};
\draw[color=red] (4.45,1.2569724719281274) node {$E$};
\draw [fill=red] (2.9798204827220367,-9.39218910211572E-10) circle (1.5pt) node[above] {\textcolor{red}{$M_E$}};
\end{small}
\end{tikzpicture}
\caption{The theorem with $C$ and $K$ intersecting. The thin black lines are the external angle bisectors
in the vertices of the brown bicentric pentagon. Their intersections, the centers of the excircles $M_1,\ldots,M_5$ of the pentagon, are concyclic on the 
circle $E$. To avoid overloading the figure, only the orange excircle  with center $M_1$ has been drawn.}\label{fig-schneiden}
\end{center}
\end{figure}
\begin{figure}[h]
\begin{center}
\definecolor{zzttqq}{rgb}{0.6,0.2,0.}
\begin{tikzpicture}[line cap=round,line join=round,x=22,y=22]
\clip(-2.626570401110714,-8.607278179597223) rectangle (15.742611625929325,8.643100976110444);
\fill[line width=2.pt,color=zzttqq,fill=zzttqq,fill opacity=0.10000000149011612] (3.5620274167997277,1.5722386034068734) -- (2.573536764164063,1.4168459106263847) -- (1.8030953703973298,0.5387722672002887) -- (4.866052090111511,-0.30852205047302883) -- (1.8902992589985745,-0.7377116471218877) -- (2.2941457543425443,-1.2336671145519895) -- (4.047902698110609,-1.4085042450471115) -- (1.71179367767781,0.09000958878608079) -- cycle;
\draw [line width=0.8pt,color=blue] (0.,0.) circle (1.);
\draw [line width=0.8pt,color=darkgreen] (3.3027271,0.) circle (1.5934776058467954);
\draw [line width=0.5pt,,color=zzttqq,domain=-2.626570401110714:15.742611625929325] plot(\x,{(-3.762974029152439-0.5843704372512939*\x)/-3.717322253469891});
\draw [line width=0.5pt,,color=zzttqq,domain=-2.626570401110714:15.742611625929325] plot(\x,{(--3.762974029152438-2.352687066772631*\x)/-2.936807298737275});
\draw [line width=0.5pt,,color=zzttqq,domain=-2.626570401110714:15.742611625929325] plot(\x,{(-2.76234389802623-0.42897774447080517*\x)/-2.7288316008342264});
\draw [line width=0.5pt,,color=zzttqq,domain=-2.626570401110714:15.742611625929325] plot(\x,{(--2.76234389802623-2.0763811370136334*\x)/-1.8218631081441645});
\draw [line width=0.5pt,,color=zzttqq,domain=-2.626570401110714:15.742611625929325] plot(\x,{(-1.3922425510704526--0.7517044936608022*\x)/-1.1718701614177736});
\draw [line width=0.5pt,,color=zzttqq,domain=-2.626570401110714:15.742611625929325] plot(\x,{(--1.3922425510704524-0.8704580521518386*\x)/-1.0865735596153572});
\draw [line width=0.5pt,,color=zzttqq,domain=-2.626570401110714:15.742611625929325] plot(\x,{(-4.167661270027458--2.2502183274939944*\x)/-3.5079791818505726});
\draw [line width=0.5pt,,color=zzttqq,domain=-2.626570401110714:15.742611625929325] plot(\x,{(--4.167661270027458--0.4134368457094749*\x)/-4.147103861286413});
\draw [line width=0.5pt,,color=zzttqq,domain=-2.626570401110714:15.742611625929325] plot(\x,{(-1.5941858331613739--0.42503129139530926*\x)/-1.5364819790636202});
\draw [line width=0.5pt,,color=zzttqq,domain=-2.626570401110714:15.742611625929325] plot(\x,{(--1.5941858331613734-1.1983074935875373*\x)/-1.0514217143774316});
\draw [line width=0.5pt,,color=zzttqq,domain=-2.626570401110714:15.742611625929325] plot(\x,{(-4.772174430938866--1.2723256090686268*\x)/-4.599438698777801});
\draw [line width=0.5pt,,color=zzttqq,domain=-2.626570401110714:15.742611625929325] plot(\x,{(--4.772174430938866-0.6812364680092506*\x)/-4.723300294705067});
\draw [line width=0.5pt,,color=zzttqq,domain=-2.626570401110714:15.742611625929325] plot(\x,{(-0.6395819893148111--0.49595546743010177*\x)/-0.40384649534396977});
\draw [line width=0.5pt,,domain=-2.626570401110714:15.742611625929325] plot(\x,{(--3.893581069411004-0.9148460898333288*\x)/0.40380271410265284});
\draw [line width=0.5pt,,domain=-2.626570401110714:15.742611625929325] plot(\x,{(--2.9377787205578887-0.8760145024385446*\x)/0.4822847618548082});
\draw [line width=0.5pt,,domain=-2.626570401110714:15.742611625929325] plot(\x,{(--1.8818683457278365-0.9581410806397086*\x)/0.2862964714951468});
\draw [line width=0.5pt,,domain=-2.626570401110714:15.742611625929325] plot(\x,{(--4.875822884324929-0.9979960727767959*\x)/-0.0632758936886085});
\draw [line width=0.5pt,,domain=-2.626570401110714:15.742611625929325] plot(\x,{(--2.0291500099472706-0.9315717994542089*\x)/-0.36355739912928103});
\draw [line width=0.5pt,,domain=-2.626570401110714:15.742611625929325] plot(\x,{(--2.604810797676907-0.8807339664582536*\x)/-0.4736113177772587});
\draw [line width=0.5pt,,domain=-2.626570401110714:15.742611625929325] plot(\x,{(--4.285953856691283-0.9444578344657105*\x)/-0.32863261998216275});
\draw [line width=0.5pt,,domain=-2.626570401110714:15.742611625929325] plot(\x,{(--1.7141584877166876-0.9986204250914816*\x)/0.052509490476564065});
\draw [line width=0.8pt,color=red] (6.605452792753372,-2.4672287885744944E-5) circle (8.368811262311148);
\draw [line width=1.pt,color=zzttqq] (3.5620274167997277,1.5722386034068734)-- (2.573536764164063,1.4168459106263847);
\draw [line width=1.pt,color=zzttqq] (2.573536764164063,1.4168459106263847)-- (1.8030953703973298,0.5387722672002887);
\draw [line width=1.pt,color=zzttqq] (1.8030953703973298,0.5387722672002887)-- (4.866052090111511,-0.30852205047302883);
\draw [line width=1.pt,color=zzttqq] (4.866052090111511,-0.30852205047302883)-- (1.8902992589985745,-0.7377116471218877);
\draw [line width=1.pt,color=zzttqq] (1.8902992589985745,-0.7377116471218877)-- (2.2941457543425443,-1.2336671145519895);
\draw [line width=1.pt,color=zzttqq] (2.2941457543425443,-1.2336671145519895)-- (4.047902698110609,-1.4085042450471115);
\draw [line width=1.pt,color=zzttqq] (4.047902698110609,-1.4085042450471115)-- (1.71179367767781,0.09000958878608079);
\draw [line width=1.pt,color=zzttqq] (1.71179367767781,0.09000958878608079)-- (3.5620274167997277,1.5722386034068734);
\begin{small}
\draw[color=blue] (-0.441002001292919,0.5643034982111519) node[xshift=-2pt] {$C$};
\draw[color=darkgreen] (4.216340184033097,1.1106955981656934) node[yshift=-3pt,xshift=-1pt] {$K$};
\draw [fill=white] (3.5620274167997277,1.5722386034068734) circle (1.5pt);
\draw [fill=white] (2.573536764164063,1.4168459106263847) circle (1.5pt);
\draw [fill=white] (1.71179367767781,0.09000958878608079) circle (1.5pt);
\draw [fill=white] (4.047902698110609,-1.4085042450471115) circle (1.5pt);
\draw [fill=white] (2.2941457543425443,-1.2336671145519895) circle (1.5pt);
\draw [fill=white] (1.8030953703973298,0.5387722672002887) circle (1.5pt);
\draw [fill=white] (4.866052090111511,-0.30852205047302883) circle (1.5pt);
\draw [fill=white] (1.8902992589985745,-0.7377116471218877) circle (1.5pt);
\draw [fill=white] (7.905091129161986,-8.267306095991989) circle (1.5pt);
\draw [fill=white] (0.3148210856708556,5.519541761176696) circle (1.5pt);
\draw [fill=white] (4.374210608029509,-8.065913355954093) circle (1.5pt);
\draw [fill=white] (5.410787008992121,8.283111793887949) circle (1.5pt);
\draw [fill=white] (0.11594646575151363,-5.284275761703372) circle (1.5pt);
\draw[color=red] (2.5511452127433483,6.834803311975176) node[yshift=4pt] {$E$};
\draw [fill=white] (7.435644922307615,8.327533781144007) circle (1.5pt);
\draw [fill=white] (1.373089932701097,6.531444741155022) circle (1.5pt);
\draw [fill=white] (2.086923154792657,-7.044166623949594) circle (1.5pt);
\end{small}
\end{tikzpicture}
\caption{The theorem with circle $C$ outside of $K$. The thin black lines are the external angle bisectors
in the vertices of the brown bicentric octagon. Their intersections, the centers of the excircles of the octagon, are concyclic on the 
circle $E$.}\label{fig-aussen}
\end{center}
\end{figure}
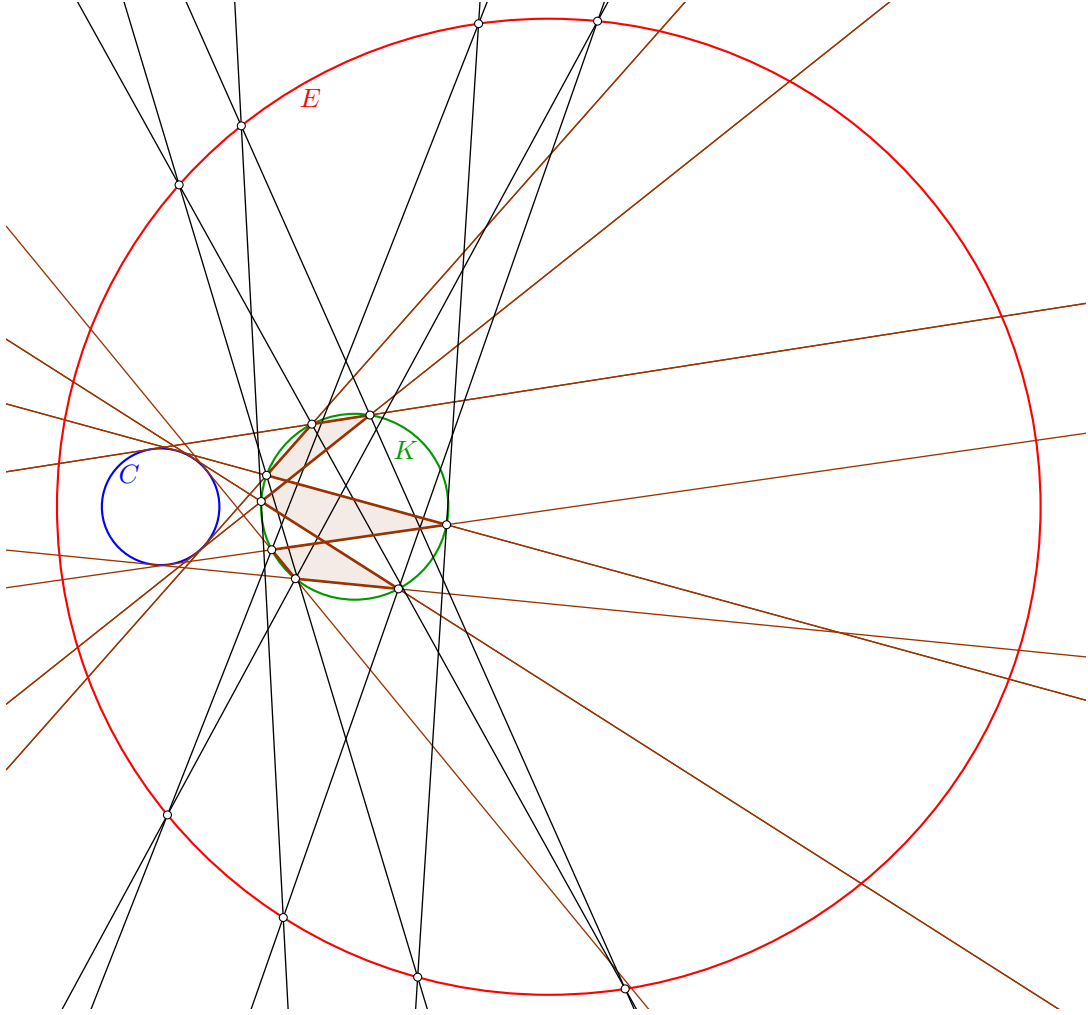
\section{Bicentric convex quadrilaterals}\label{sec-quadrilateral}
Bicentric convex quadrilaterals have been widely studied, see, e.g.,~\cite{hans,ana,martin,wu,g}.
Here we want to interpret the relation~(\ref{eq-quadrilateral}) geometrically.
We start with the following lemma which is of some independent interest.
\begin{lemma}\label{lem-diagonals}
Let $A_1A_2A_3A_4$ be a bicentric convex quadrilateral,  $M_C$  the center of its incircle, and $M_1, M_2, M_3,M_4$ 
 the centers of the excircles. Then the diagonals of the quadrilateral $M_1M_2M_3$ $M_4$ intersect perpendicularly in $M_C$.
\end{lemma}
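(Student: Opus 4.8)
The plan is to prove two separate statements: that both diagonals $M_1M_3$ and $M_2M_4$ pass through the incenter $M_C$, and that they meet there at a right angle. Throughout I use the description of $M_i$ from the introduction, namely that $M_i$ is the center of the excircle tangent to the side $a_i$ and to the two neighbouring lines $a_{i-1}$ and $a_{i+1}$, and I write $T_1,\dots,T_4$ for the points where the incircle $C$ touches $a_1,\dots,a_4$.

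First I would establish the collinearity of $M_1,M_C,M_3$. The key observation is that $M_1$, $M_3$ and $M_C$ are all tangent to (equidistant from) the pair of opposite lines $a_2$ and $a_4$. Let $X=a_2\cap a_4$, and regard the triangle bounded by the three lines $a_1,a_2,a_4$, with vertices $X$, $A_1=a_4\cap a_1$ and $A_2=a_1\cap a_2$. Then $M_1$, being tangent to $a_1$ and to the extensions of $a_2$ and $a_4$, is precisely the excenter of this triangle opposite $X$; likewise, using the triangle bounded by $a_3,a_2,a_4$, the point $M_3$ is the excenter opposite $X$ of that triangle. Both of these excenters lie on the internal bisector of the angle at $X$, and $M_C$, being equidistant from $a_2$ and $a_4$ on their interior sides, lies on the same bisector ray. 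Hence $M_1,M_C,M_3$ are collinear on the internal bisector of the angle at $X$. The identical argument, with $X'=a_1\cap a_3$, shows that $M_2,M_C,M_4$ are collinear on the internal bisector of the angle between $a_1$ and $a_3$. When a pair of opposite sides happens to be parallel, the relevant bisector degenerates to the midline equidistant from the two parallel lines, and the same conclusion holds.

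It remains to show that these two bisectors are perpendicular. Reflection in the bisector of $(a_2,a_4)$ interchanges the lines $a_2$ and $a_4$, hence interchanges the contact points $T_2$ and $T_4$; since the axis passes through the center $M_C$ it is the perpendicular bisector of the chord $T_2T_4$. Similarly the bisector of $(a_1,a_3)$ is the perpendicular bisector of $T_1T_3$. Thus the diagonals satisfy $M_1M_3\perp T_2T_4$ and $M_2M_4\perp T_1T_3$, so it suffices to prove that the contact chords are themselves orthogonal, $T_1T_3\perp T_2T_4$: in the plane, if one line is orthogonal to $T_2T_4$ and a second to $T_1T_3$ while $T_2T_4\perp T_1T_3$, then the two lines are orthogonal. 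This perpendicularity of the contact chords is the hallmark of bicentric quadrilaterals, and I would derive it from the cyclic hypothesis. Writing $\beta_k$ for the angular position of $T_k$ on $C$, the chord $T_iT_j$ is perpendicular to the radius in direction $(\beta_i+\beta_j)/2$; applying the right angles at the contact points to the kite $A_kT_{k-1}M_CT_k$ gives $2\phi_k+\gamma_k=\pi$, where $2\phi_k$ is the interior angle at $A_k$ and $\gamma_k=\angle T_{k-1}M_CT_k$ is the contact arc. The cyclic condition $2\phi_1+2\phi_3=2\phi_2+2\phi_4=\pi$ then yields $\gamma_1+\gamma_3=\gamma_2+\gamma_4=\pi$, whence $(\beta_1+\beta_3)-(\beta_2+\beta_4)=-(\gamma_2+\gamma_4)\equiv\pi\pmod{2\pi}$, which is exactly the statement $T_1T_3\perp T_2T_4$.

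The main obstacle is bookkeeping rather than any deep idea: in the collinearity step one must verify the correct branch, i.e.\ that $M_1$, $M_3$ and $M_C$ all fall on the \emph{same} (internal) bisector at $X$ and not on mutually perpendicular bisectors. This is exactly why the excenter interpretation is convenient, since it is standard that the excenter opposite $X$ shares the internal $X$-bisector with the incenter, removing the sign ambiguity for free. The only remaining computation is the short angle chase translating the cyclic condition into $\gamma_2+\gamma_4=\pi$ and thence into the perpendicularity of the contact chords.
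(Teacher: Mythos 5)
Your proof is correct, but it follows a genuinely different route from the paper's. The paper reuses a byproduct of its Lemma~1: the vertices $A_i,A_{i+1}$ lie on the Thales circle with diameter $M_iM_C$, so the inscribed angle theorem converts $\sphericalangle M_{i-1}M_CM_i$ into $\bigl(\tfrac\pi2-\tfrac{\sphericalangle A_{i-1}}2\bigr)+\bigl(\tfrac\pi2-\tfrac{\sphericalangle A_{i+1}}2\bigr)$, and the cyclic condition on opposite vertex angles makes each such angle equal to $\tfrac\pi2$; collinearity of $M_1,M_C,M_3$ and of $M_2,M_C,M_4$ and the perpendicularity then come out simultaneously from the four right angles around $M_C$. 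You instead decouple the two assertions: collinearity follows from the tangential structure alone (all of $M_1,M_C,M_3$ are equidistant from the opposite side-lines $a_2,a_4$, and the excenter-opposite-$X$ interpretation correctly pins down the common branch of the bisector at $X=a_2\cap a_4$, with the parallel case handled by the midline), while the right angle is where the cyclic hypothesis enters, via the classical fact that the contact chords $T_1T_3$ and $T_2T_4$ of a bicentric quadrilateral are perpendicular, transferred to the diagonals by the reflection argument. Your angle chase $2\phi_k+\gamma_k=\pi$ and the reduction $(\beta_1+\beta_3)-(\beta_2+\beta_4)\equiv\pi$ check out. What your version buys is independence from Lemma~1 and a clean separation of which hypothesis (tangential versus cyclic) is responsible for which conclusion, plus the contact-chord perpendicularity as a reusable intermediate fact; what it costs is length and the branch-of-bisector bookkeeping that the paper's Thales-circle computation avoids entirely.
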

\begin{proof}
As we have seen in the proof of Lemma~\ref{lem-main}, $A_i$ and $A_{i+1}$ lie on the  circle of Thales with diameter $M_iM_C$ (see Figure~\ref{fig-o}).
\begin{figure}[h]
\begin{center}
\definecolor{zzttqq}{rgb}{0.6,0.2,0.}
\begin{tikzpicture}[line cap=round,line join=round,x=20,y=20]
\draw[line width=1.5pt,color=zzttqq,fill=zzttqq,fill opacity=0.1] (-2.4523456637203016,5.006007881789149) -- (-0.6325976547419919,-1.7175295189389597) -- (8.084641659561921,2.499280833004442) -- (2.094572470689492,8.249391077160652) -- cycle;
\draw[line width=1pt,color=black] (-1.206544463467962,8.4897325031736) -- (8.917376485533874,7.752649374566859) -- (6.8734699732783815,-5.141484314203836) -- (-4.264301995344298,-0.060897563536627876) -- cycle;
\draw [line width=.8pt,color=darkgreen] (2.58,2.76) circle (5.510812488574031);
\draw [line width=.8pt,color=blue] (1.7490473498060923,3.5035640815949423) circle (3.662960554618243);
\draw [line width=.8pt,color=red] (3.410952650193909,2.0164359184050573) circle (7.9514054272846515);
\draw [line width=.5pt] (-4.264301995344298,-0.060897563536627876)-- (8.917376485533874,7.752649374566859);
\draw [line width=.5pt] (-1.206544463467962,8.4897325031736)-- (6.8734699732783815,-5.141484314203836);

\draw [line width=0.5pt] (0.27125144316906247,5.996648292384271) circle (2.898163146505076);
\draw [line width=0.5pt] (-1.2576273227691028,1.7213332590291572) circle (3.4952023248459696);

\draw [shift={(2.094572470689492,8.249391077160652)},line width=0.5pt,color=orange] (175.8358654325317:1.6618488341594626) arc (175.8358654325317:215.50081854516802:1.6618488341594626);
\draw [shift={(2.094572470689492,8.249391077160652)},line width=0.5pt,color=orange] (175.8358654325317:1.5107716674176932) arc (175.8358654325317:215.50081854516802:1.5107716674176932);
\draw [shift={(1.7490473498060923,3.5035640815949423)},line width=0.5pt,color=orange] (120.65768653288617:1.3596945006759238) arc (120.65768653288617:160.32263964552243:1.3596945006759238);
\draw [shift={(1.7490473498060923,3.5035640815949423)},line width=0.5pt,color=orange] (120.65768653288617:1.2086173339341544) arc (120.65768653288617:160.32263964552243:1.2086173339341544);
\draw [shift={(1.7490473498060923,3.5035640815949423)},line width=0.5pt,color=cyan] (160.32263964552243:1.6618488341594626) arc (160.32263964552243:210.6576865328861:1.6618488341594626);
\draw [shift={(1.7490473498060923,3.5035640815949423)},line width=0.5pt,color=cyan] (160.32263964552243:1.5107716674176932) arc (160.32263964552243:210.6576865328861:1.5107716674176932);
\draw [shift={(1.7490473498060923,3.5035640815949423)},line width=0.5pt,color=cyan] (160.32263964552243:1.3596945006759238) arc (160.32263964552243:210.6576865328861:1.3596945006759238);
\draw [shift={(-0.6325976547419919,-1.7175295189389597)},line width=0.5pt,color=cyan] (105.14446074587686:1.5107716674176932) arc (105.14446074587686:155.47950763324053:1.5107716674176932);
\draw [shift={(-0.6325976547419919,-1.7175295189389597)},line width=0.5pt,color=cyan] (105.14446074587686:1.3596945006759238) arc (105.14446074587686:155.47950763324053:1.3596945006759238);
\draw [shift={(-0.6325976547419919,-1.7175295189389597)},line width=.5pt,color=cyan] (105.14446074587686:1.2086173339341544) arc (105.14446074587686:155.47950763324053:1.2086173339341544);
\draw [color=blue, line width=.5pt] (2.094572470689492,8.249391077160652)-- (1.7490473498060923,3.5035640815949423);
\draw [color=blue, line width=.5pt] (1.7490473498060923,3.5035640815949423)-- (-2.4523456637203016,5.006007881789149);
\draw [color=blue, line width=.5pt] (1.7490473498060923,3.5035640815949423)-- (-0.6325976547419919,-1.7175295189389597);
\begin{small}
\draw [fill=white] (-2.4523456637203016,5.006007881789149) circle (1.5pt) node[left,yshift=5pt,xshift=1pt] {$A_1$};
\draw [fill=blue] (1.7490473498060923,3.5035640815949423) circle (1.5pt) node[right,blue,yshift=-2pt,xshift=1pt] {$M_C$};
\draw [fill=white] (-0.6325976547419919,-1.7175295189389597) circle (1.5pt) node[below] {$A_2$};
\draw [fill=white] (8.084641659561921,2.499280833004442) circle (1.5pt) node[right] {$A_3$};
\draw [fill=white] (2.094572470689492,8.249391077160652) circle (1.5pt) node[above,yshift=-1pt,xshift=3pt] {$A_4$};
\draw [fill=white] (-4.264301995344298,-0.060897563536627876) circle (1.5pt) node[left,xshift=1pt] {$M_1$};
\draw [fill=white] (6.8734699732783815,-5.141484314203836) circle (1.5pt) node[below] {$M_2$};
\draw [fill=white] (8.917376485533874,7.752649374566859) circle (1.5pt) node[above,xshift=5pt,yshift=-2pt] {$M_3$};
\draw [fill=white] (-1.206544463467962,8.4897325031736) circle (1.5pt)  node[above,xshift=-3pt] {$M_4$};
\draw (11,5.5) node[red] {$E$};
\draw (7.5,6) node[darkgreen] {$K$};
\draw (5.66,3) node[blue] {$C$};

\end{small}
\end{tikzpicture}
\caption{Proof of Lemma~\ref{lem-diagonals}.}\label{fig-o}
\end{center}
\end{figure}
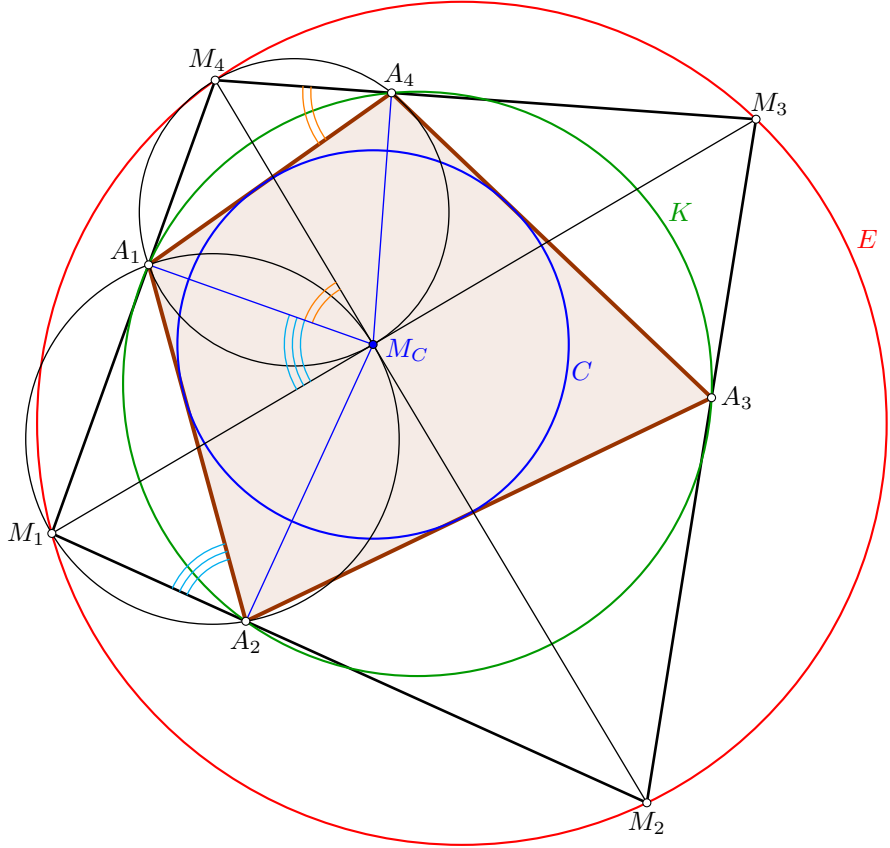
Therefore we  have
\begin{eqnarray*}
\sphericalangle M_4M_CM_1 &=& \sphericalangle M_4M_CA_1 + \sphericalangle A_1M_CM_1 \\
&=& \sphericalangle M_4A_4A_1 + \sphericalangle A_1A_2M_1 \\
&=& \bigl(\frac\pi2 - \frac{\sphericalangle A_1A_4A_3}2\bigr) + \bigl(\frac\pi2 - \frac{\sphericalangle A_3A_2A_1}2\bigr)\\ 
&=& \pi - \frac12(\sphericalangle A_1A_4A_3+\sphericalangle A_3A_2A_1) \\&=& \frac\pi2 .
\end{eqnarray*}
By shifting the indices cyclically, we get the desired result.
\end{proof}
Now we can interpret the relation~(\ref{eq-quadrilateral}) which connects the radii of the circles $E$ and $K$ with the distance $d$ of their centers geometrically:
\begin{equation}
R_E^2 =2(R_K^2+d^2).\tag{\ref{eq-quadrilateral}}
\end{equation}
\begin{proof}[Proof of~(\ref{eq-quadrilateral})]
According to Poncelet's theorem, we can move $A_1$ along $K$ until it coincides with the line $\ell$ through the points $M_C, M_K, M_E$.
In this position, the points $A_2,A_4$ lie symmetrically with respect to $\ell$ and hence $A_3$  also lies on $\ell$
 (see Figure~\ref{fig-p}).
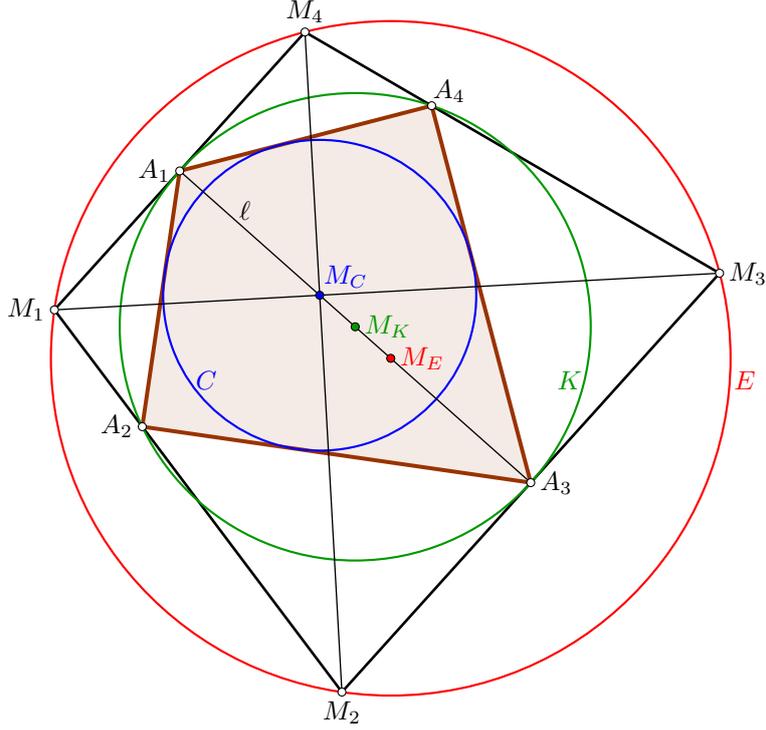
\begin{figure}[h]
\begin{center}

\definecolor{zzttqq}{rgb}{0.6,0.2,0.}
\begin{tikzpicture}[line cap=round,line join=round,x=16,y=16]
\draw[line width=1.5pt,color=zzttqq,fill=zzttqq,fill opacity=0.1] (-1.5267012696276265,6.43475284477209) -- (-2.4022232749685144,0.4049511626117529) -- (6.686739262069152,-0.9147103860714553) -- (4.369109771733129,7.972282610604568) -- cycle;
\draw[line width=1.pt] (1.4044723410511175,9.710484530538366) -- (11.104990889890622,4.022956001268159) -- (2.2684419544397585,-5.8524278233899185) -- (-4.457905185381503,3.1589872915833936) -- cycle;
\draw [line width=.8pt,color=darkgreen] (2.58,2.76) circle (5.51078976083480);
\draw [line width=.8pt,color=blue] (1.7490473498060923,3.5035640815949423) circle (3.662943563811760);
\draw [line width=.8pt,color=red] (3.410952650193909,2.0164359184050578) circle (7.95137392384789);
\draw [line width=0.5pt] (-4.457905185381503,3.1589872915833936)-- (11.104990889890622,4.022956001268159);
\draw [line width=0.5pt] (1.4044723410511175,9.710484530538366)-- (2.2684419544397585,-5.8524278233899185);

\draw [line width=0.5pt] (1.7490473498060923,3.5035640815949423)-- (-1.5267012696276265,6.43475284477209);
\draw [line width=0.5pt] (1.7490473498060923,3.5035640815949423)-- (6.686739262069152,-0.9147103860714553);
\begin{small}
\draw [fill=darkgreen] (2.58,2.76) circle (1.5pt)  node[darkgreen,right] {$M_K$};
\draw [fill=white] (-1.5267012696276265,6.43475284477209) circle (1.5pt) node[left] {$A_1$};
\draw [fill=blue] (1.7490473498060923,3.5035640815949423) circle (1.5pt) node[blue,right,yshift=7pt,xshift=-2pt] {$M_C$};
\draw [fill=white] (-2.4022232749685144,0.4049511626117529) circle (1.5pt) node[left] {$A_2$};
\draw [fill=white] (6.686739262069152,-0.9147103860714553) circle (1.5pt)  node[right] {$A_3$};
\draw [fill=white] (4.369109771733129,7.972282610604568) circle (1.5pt) node[anchor=south west,xshift=-3pt,yshift=-2pt] {$A_4$};
\draw [fill=white] (-4.457905185381503,3.1589872915833936) circle (1.5pt)  node[left] {$M_1$};
\draw [fill=white] (2.2684419544397585,-5.8524278233899185) circle (1.5pt) node[below] {$M_2$};
\draw [fill=white] (11.104990889890622,4.022956001268159) circle (1.5pt) node[right] {$M_3$};
\draw [fill=white] (1.4044723410511175,9.710484530538366) circle (1.5pt) node[above] {$M_4$};
\draw [fill=red] (3.410952650193909,2.0164359184050578) circle (1.5pt) node[red,right] {$M_E$};
\draw (11.7,1.5) node[red] {$E$};
\draw (7.6,1.5) node[darkgreen] {$K$};
\draw (-.9,1.5) node[blue] {$C$};
\draw (0,5.5) node {$\ell$};
\end{small}
\end{tikzpicture}
\caption{Proof of relation~(\ref{eq-quadrilateral}).}\label{fig-p}
\end{center}
\end{figure}
By Lemma~\ref{lem-diagonals} the triangle $M_1M_CM_4$ is now an isosceles right-angled triangle. $A_1$ is the midpoint of the segment $M_1M_4$, so the two triangles $M_1M_CA_1$ and $A_1M_CM_4$ are also isosceles right-angled triangles. Pythagoras in the triangle $A_1M_1M_E$ yields
\begin{multline*}
\qquad R_E^2 = |M_1M_E|^2 = |M_EA_1|^2 + |A_1M_1|^2 = (R_K+d)^2 + |A_1M_C|^2 =\\= (R_K+d)^2 + (R_K-d)^2=2(R_K^2+d^2),\qquad
\end{multline*}
as desired.
\end{proof}
\section{The area ratio of a bicentric convex polygon and its excenters polygon}\label{sec4}
It turns out that bicentric polygons have another nice property. This observation was originally formulated as a conjecture by a colleague.
\begin{theorem}
Let  $A_1A_2\ldots A_n$ be a bicentric convex polygon, $r_C$ the center of its incircle,
$M_1M_2\ldots M_n$ the polygon formed by the centers of the excircles, and $r_E$ the 
radius of its circumcircle. Then, we have 
$$
\frac{\operatorname{area}A_1A_2\ldots A_n}{{\operatorname{area}M_1M_2\ldots M_n}}=\frac{r_C}{r_E}.
$$
\end{theorem}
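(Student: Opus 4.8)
The plan is to compare both $B$ and $P$ with a simpler auxiliary polygon, the \emph{contact polygon} $T_1\cdots T_n$, where $T_i$ is the point at which the incircle $C$ touches the side $a_i=A_iA_{i+1}$. Since $B$ is tangential, cutting it into the triangles $M_CA_iA_{i+1}$ gives $\operatorname{area}B=\tfrac12 r_C\operatorname{perim}B$. Identifying the plane with $\mathbb{C}$ and placing $M_C,M_K,M_E$ on the real axis, one has $T_i=M_C+r_Ce^{i\mu_i}$, where $\mu_i$ is the direction of the outward unit normal of the line $a_i$; thus the contact polygon is inscribed in $C$ and $\operatorname{area}(T_1\cdots T_n)=\tfrac12 r_C^2\sum_i\sin(\mu_i-\mu_{i-1})$.

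The decisive observation is that $P=M_1\cdots M_n$ is the image of the contact polygon under the homothety $h\colon z\mapsto M_E+\tfrac{r_E}{r_C}(z-M_C)$ of ratio $r_E/r_C$ that carries $C$ onto $E$. This is already implicit in the proofs above: the circumcenter $M_{D_i}$ of the triangle $M_CA_iA_{i+1}$ lies on the perpendicular bisector of $A_iA_{i+1}$, which is the line through $M_K$ in the normal direction $\mu_i$; by Lemma~\ref{lem-main}, $|M_{D_i}M_K|=R_F=r_E/2$, so $M_{D_i}=M_K+\tfrac{r_E}{2}e^{i\mu_i}$. Applying the homothety with center $M_C$ and factor $2$ from the proof of Theorem~\ref{thm-main}, which maps $M_{D_i}$ to $M_i$ and $M_K$ to $M_E$, we obtain $M_i=M_E+r_Ee^{i\mu_i}=M_E+\tfrac{r_E}{r_C}(T_i-M_C)=h(T_i)$. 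As $h$ is a similarity of ratio $r_E/r_C$, this gives $\operatorname{area}P=\bigl(\tfrac{r_E}{r_C}\bigr)^2\operatorname{area}(T_1\cdots T_n)$.

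It now suffices to establish the single relation $\operatorname{area}B=\tfrac{r_E}{r_C}\operatorname{area}(T_1\cdots T_n)$: together with the previous identity it yields $\operatorname{area}B/\operatorname{area}P=r_C/r_E$, which is the assertion of the theorem. Writing $\delta_i$ for the central angle that the side $a_i$ subtends at $M_K$, decomposing $B$ from $M_K$ gives $\operatorname{area}B=\tfrac12 R_K^2\sum_i\sin\delta_i$, and recalling $r_Er_C=R_K^2-d^2$, the desired relation is equivalent to
\[
R_K^2\sum_i\sin\delta_i=(R_K^2-d^2)\sum_i\sin(\mu_i-\mu_{i-1}),
\]
an identity involving only the angular positions $\phi_i$ of the vertices on $K$ (here $\mu_i-\mu_{i-1}=\tfrac12(\delta_{i-1}+\delta_i)$).

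I expect this identity to be the heart of the matter, since it is precisely where the position of $C$ relative to $K$ enters. I would prove it from the $n$ tangency conditions $R_K\cos\tfrac{\delta_i}2-d\cos\mu_i=r_C$ (each expressing that $a_i$ is tangent to $C$): substituting these and using the telescoping identity $\sum_i(\sin\phi_{i+1}-\sin\phi_i)=0$ reduces the statement to a closed trigonometric identity in the $\phi_i$. The subtle point is that a single tangency condition only reproduces $\operatorname{area}B=\tfrac12 r_C\operatorname{perim}B$, so the identity must be forced by all $n$ conditions coupled through the relation $\mu_i-\mu_{i-1}=\tfrac12(\delta_{i-1}+\delta_i)$. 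A more conceptual alternative, in the spirit of Section~\ref{sec-quadrilateral}, is a dynamical argument: $\operatorname{area}B$ is constant along the Poncelet family (the perimeter of a Poncelet polygon inscribed in $K$ and circumscribed about $C$ does not change), so once the ratio $\operatorname{area}B/\operatorname{area}(T_1\cdots T_n)$ is shown to be Poncelet-invariant, it can be evaluated in the configuration symmetric about the line $M_CM_KM_E$, exactly as in the proof of~(\ref{eq-quadrilateral}).
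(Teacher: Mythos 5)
There is a genuine gap, and it sits exactly where you place it: the identity $\operatorname{area}B=\tfrac{r_E}{r_C}\operatorname{area}(T_1\cdots T_n)$, equivalently $R_K^2\sum_i\sin\delta_i=(R_K^2-d^2)\sum_i\sin(\mu_i-\mu_{i-1})$, is asserted but never proved — you offer two strategies and carry out neither. The first (summing the $n$ tangency conditions) is only a hope: both sides of the identity vary along the Poncelet family and only their equality persists, so it is a genuinely nontrivial relation, and you give no indication of how the telescoping actually closes. The second rests on a false premise: neither the perimeter nor the area of a bicentric Poncelet polygon is invariant along the family (that is a property of \emph{confocal} Poncelet families, not of two circles). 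Concretely, with $R_K=1$, $R_C=0.4$, $d^2=0.2$ (so the Chapple--Euler relation holds), the isosceles triangle of the family with apex at the point of $K$ farthest from $M_C$ has area $\approx 0.98$, while the one with apex at the nearest point has area $\approx 0.95$. Hence ``$\operatorname{area}B$ is constant along the family'' cannot be used, and proving that the ratio $\operatorname{area}B/\operatorname{area}(T_1\cdots T_n)$ is Poncelet-invariant is not visibly easier than the theorem itself.

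The frustrating part is that you have already derived everything needed to finish, and the contact polygon is what gets in your way. Your formula $M_i=M_E+r_Ee^{i\mu_i}$ says precisely that the segment $M_EM_i$ has length $r_E$ and is perpendicular to the side $a_i$. Since $M_i$ and $M_{i+1}$ both lie on the external bisector at $A_{i+1}$, the vertex $A_{i+1}$ lies on the side $M_iM_{i+1}$ of the excentral polygon, so the quadrilaterals $M_EA_iM_iA_{i+1}$ tile $M_1\cdots M_n$; each is orthodiagonal with diagonals $A_iA_{i+1}$ and $M_EM_i$, hence has area $\tfrac12\,a_i\,r_E$, while the triangle $M_CA_iA_{i+1}$ has area $\tfrac12\,a_i\,r_C$. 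The ratio $r_C/r_E$ therefore holds termwise and the theorem follows by summation — this is the paper's proof. I would drop the detour through $T_1\cdots T_n$ entirely; it is what forces the unproven trigonometric identity into your argument.
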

\begin{proof}
As before, we denote the center of the incircle of $A_1A_2\ldots A_n$ by $M_C$ and the center of the circumcircle of $M_1M_2\ldots M_n$ by $M_E$. Observe that the lines $M_EM_i$ are perpendicular to the sides $a_i=A_iA_{i+1}$ of the bicentric polygon. 
To see this, look again at the homothety with center $M_C$ and factor $2$ that we used in the proof of Theorem~\ref{thm-main}, and
go back to Figure~\ref{fig-main-proof}: The  line $M_KM_D$ is the perpendicular bisector of the line segment $P_1P_2$. 
By the homothety, the  line $M_KM_D$ is mapped to the  line $M_EN$, which is therefore also perpendicular to $P_1P_2$. 

Consider now the orthodiagonal quadrilateral $M_E A_i M_i A_{i+1}$ (see Figure~\ref{fig-areas} where the case $i=1$ is shown). Its area is given by
$$
\operatorname{area}M_EA_iM_iA_{i+1}=\frac{1}{2}\cdot a_i\cdot r_E.
$$
Here $a_i$ denotes the length of the side $A_iA_{i+1}$.
Similarly we have for the area of the triangle $M_CA_iA_{i+1}$
$$
\operatorname{area}M_CA_iA_{i+1}=\frac{1}{2}\cdot a_i\cdot r_C.
$$
This leads us to 
$$
\frac{\operatorname{area}{M_CA_iA_{i+1}}}{\operatorname{area}{M_EA_iM_iA_{i+1}}}=\frac{r_C}{r_E}
$$
and then, by taking the sum over $i$, to the desired result
$$
\frac{\operatorname{area}{A_1A_2\ldots A_n}}{\operatorname{area}{M_1M_2\ldots M_n}}=\frac{r_C}{r_E}.
$$
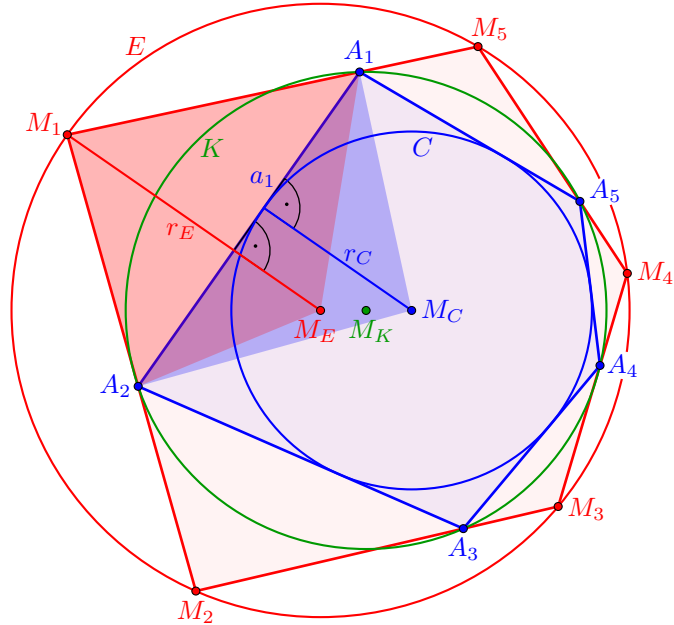
\begin{figure}[h]
\begin{center}
\begin{tikzpicture}[line cap=round,line join=round,x=45,y=45]
\draw[line width=1.pt,color=red,fill=red,fill opacity=0.05] (-1.41622,-2.352396) -- (1.598147,-1.64346) -- (2.173211,0.310477) -- (0.930974,2.212226) -- (-2.48594,1.4731574) -- cycle;
\draw[line width=1.pt,color=blue,fill=blue,fill opacity=0.05] (-0.053518,1.99928) -- (1.77874,0.914374) -- (1.9460,-0.4612) -- (0.80908,-1.829) -- (-1.896126,-0.636163) -- cycle;
\fill[line width=2.pt,color=blue,fill=blue,fill opacity=0.25] (0.3789,0.) -- (-0.053518838,1.9992838) -- (-1.896126,-0.63616) -- cycle;
\fill[line width=2.pt,color=red,fill=red,fill opacity=0.25] (-0.3789194,0.) -- (-0.053518,1.999283) -- (-2.48594,1.473157) -- (-1.89612,-0.636163) -- cycle;
\draw [shift={(-0.8504115901235729,0.8595028854624439)},line width=0.5pt] (0,0) -- (-34.95982543441039:0.3) arc (-34.95982543441039:55.04017456558961:0.3) -- cycle;
\draw [shift={(-1.0992337446139382,0.5036171734927133)},line width=0.5pt] (0,0) -- (-34.9598254344104:0.3) arc (-34.9598254344104:55.040174565589616:0.3) -- cycle;
\draw [line width=.8pt,color=blue] (0.37891944335835,0.) circle (1.5);
\draw [line width=.8pt,darkgreen] (0.,0.) circle (2.);
\draw [line width=.8pt,color=red] (-0.37891944335834926,0.) circle (2.5709467036300024);
\draw [line width=0.8pt,color=red] (-0.37891944335835,0.)-- (-2.4859491554919564,1.4731574067600954);
\draw [line width=0.8pt,color=blue] (0.37891944335835,0.)-- (-0.8504115901235729,0.8595028854624439);
\fill[line width=0.8pt] (-0.66,0.89) circle (0.015);
\fill[line width=0.8pt] (-0.92,0.54) circle (0.015);
\begin{small}
\draw [fill=darkgreen] (0.,0.) circle (1.5pt) node[below,xshift=2pt] {\textcolor{darkgreen}{$M_K$}};
\draw [fill=blue] (0.37891944335835,0.) circle (1.5pt) node[right,blue] {$M_C$};
\draw[color=blue] (0.47170835711982295,1.415726674381342) node[yshift=-2pt] {$C$};
\draw[color=darkgreen] (-1.281162594698142,1.365644647186543) node {$K$};
\draw [fill=blue] (-0.05351883897849403,1.9992838052348632) circle (1.5pt) node[above,blue] {$A_1$};

\draw [fill=white,white] (2.,1.) circle (5.2pt) ;
\draw [fill=blue] (1.7787411567601783,0.9143740466830097) circle (1.5pt) node[blue,right,yshift=4pt] {$A_5$};
\draw [fill=blue] (-1.8961264957590171,-0.6361637462797056) circle (1.5pt) node[left,blue,xshift=1pt] {$A_2$};

\draw [fill=white,white] (2.15,-0.4612333581876642) circle (5pt) ;
\draw [fill=blue] (1.94608935799334,-0.4612333581876642) circle (1.5pt) node[right,blue,xshift=-1pt] {$A_4$};
\draw [fill=red] (0.9309743115740363,2.2122263228917354) circle (1.5pt) node[above,red,xshift=4.4pt,yshift=-1pt] {$M_5$};
\draw [fill=red] (-2.4859491554919564,1.4731574067600954) circle (1.5pt) node[left,yshift=4pt,xshift=2pt,red] {$M_1$};
\draw [fill=blue] (0.8090813035881343,-1.8290400335105095) circle (1.5pt) node[below,blue] {$A_3$};
\draw [fill=red] (-1.4162217854373451,-2.3523968211216895) circle (1.5pt) node[below,red] {$M_2$};
\draw [fill=red] (2.173211230712538,0.31047701263129) circle (1.5pt) node[right,red] {\small$M_4$};
\draw [fill=red] (1.5981473973798999,-1.6434639211614244) circle (1.5pt) node[right,yshift=-1pt,red] {$M_3$};
\draw[color=red] (-1.92,2.217039109498125) node {$E$};
\draw[color=blue] (-0.86,1.1) node {$a_1$};
\draw [fill=red] (-0.37891944335835,0.) circle (1.5pt) node[below,xshift=-2pt,red] {$M_E$};
\draw[color=red] (-1.5378329840714868,0.6957975334561073) node[yshift=-1.3pt] {$r_E$};
\draw[color=blue] (-0.06041318182491645,0.45) node {$r_C$};
\end{small}
\end{tikzpicture}
\caption{The ratio of the red and the blue area is independent of the position of the starting point $A_1$ 
of the Poncelet polygon $A_1A_2\ldots A_n$ on $K$.}\label{fig-areas}
\end{center}
\end{figure}
\end{proof}

\bibliographystyle{plain}

\end{document}